\newcommand {\lt}{\left}
\newcommand {\rt}{\right}
\newcommand {\zd}{\bf{Z^d}}
\newcommand {\la}{\lambda}
\newtheorem{cor}{Corollary}[section]
\newtheorem{thm}{Theorem}[section]
\newtheorem{lemma}{Lemma}[section]
\begin{document}

\title{\bf{ On the Critical Behavior of a Homopolymer Model}}

\author{ M.  Cranston , \,    S.  Molchanov} 
\maketitle
\footnote{Research of the authors supported in part by a 
grant from NSF } 
\begin{abstract}
Taking $P^0$ to be the measure  induced by simple, symmetric nearest neighbor continuous time random walk on ${\bf{Z^d}}$ starting at $0$ with jump rate $2d$ define, 
for $\beta\ge 0,\,t>0,$  the Gibbs probability measure $P_{\beta,t}$ by specifying its density with respect to $P^0$ as
\begin{eqnarray}
\frac{dP_{\beta,t}}{dP^0}=Z_{\beta,t}(0)^{-1}e^{\beta \int_0^t\delta_0(x_s)ds}
\end{eqnarray}
where $Z_{\beta,t}(0)\equiv E^0[e^{\beta \int_0^t\delta_0(x_s)ds}].$ This Gibbs probability measure provides a simple model for a homopolymer with an attractive potential at the origin. In a previous paper \cite{CM07}, we showed that for dimension $d\ge3$ there is a phase transition in the behavior of these paths from diffusive behavior for $\beta$  below a critical parameter to positive recurrent behavior for $\beta$  above this critical value. This corresponds to a transition from a diffusive or stretched out phase to a globular  phase for the polymer.  The critical value was determined by means of the spectral properties of the operator $\Delta+\beta\delta_0$ where $\Delta$ is the discrete Laplacian on ${\bf{Z^d}}.$ In this paper we give a description of the polymer at the critical value where the phase transition takes place. The behavior at the critical parameter is in some sense midway between the two phases and dimension dependent. 
\end{abstract}

{\it Key words:} Gibbs measure, homopolymer,
phase transition, globular phase, diffusive phase.

{\it 2000 Mathematics Subject Classification Numbers:} 60K37,
60K35, 82B26, 82B27, 82D60, 35K10.

\section{Introduction}

In this paper we provide a picture of a homopolymer model at the critical value of a parameter. Other properties of this model were discussed in the previous work \cite{CM07} of the authors. An approach to some of the results of  \cite{CM07} using renewal theory is explained in \cite{G}. Using the methods of \cite{CM07}, we can now give a fairly complete description of the polymer behavior at the critical parameter in all dimensions. Interest in polymer models is wide spread. An early work on the subject is \cite{MF} which has been followed by myriad contributions and we refer the reader to the interesting paper {\cite{LGK} or the monograph \cite{G} and their extensive bibliographies. In contrast to other work on the homogeneous pinning model, our approach uses spectral theory and resolvent analysis in place of renewal theory. In our case we are able to establish some interesting and explicit results about the behavior of the pinned homopolymer at the critical parameter which gives the point where a phase transition occurs. In order to describe the model,  denote by $\Sigma$ the space right continuous, left limit  paths  on  $[0,\infty)$ into ${\bf{Z^d}}.$ A typical element of $\Sigma$ will have its position at time $s$ denoted by $x_s.$ Sometimes we shall consider the restriction of elements of $\Sigma$ to the interval $[0,t]$ and use $\Sigma_t$ to denote this set of paths. The corresponding Borel $\sigma-$fields shall be denoted by $\mathcal{B}_\infty$ and $\mathcal{B}_t,$ respectively. Our reference measure on $\Sigma$ shall be $P^0$, where $P^x$ denotes the measure induced by simple, symmetric nearest neighbor continuous time random walk on ${\bf{Z^d}}$ starting at $x$ with jump rate $2d.$ This is the Markov process whose generator is the discrete Laplacian; $\Delta \psi(x)=\sum_{y:|x-y|=1}(\psi(y)-\psi(x)).$ 
Define for $\beta\ge 0$ and $t>0$ the Gibbs probability measure $P_{\beta,t}$ on the Borel subsets  $\mathcal{B}_t$ of $\Sigma_t$ by specifying its density with respect to $P^0$ as
\begin{eqnarray*}
\frac{dP_{\beta,t}}{dP^0}=Z_{\beta,t}(0)^{-1}e^{\beta \int_0^t\delta_0(x_s)ds}
\end{eqnarray*}
where 
$$Z_{\beta,t}(0)\equiv E^0[e^{\beta \int_0^t\delta_0(x_s)ds}]$$ 
is the usual normalizing factor called the partition function.
Setting 
$$Z_{\beta,t}(x)=E^x[ e^{\beta\int_0^t\delta_0(x_s)ds}]$$ 
enables us to define the Gibbs measure $P_{\beta,t}^x$ on paths started at $x$ by
\begin{eqnarray*}
\frac{dP_{\beta,t}^x}{dP^x}=Z_{\beta,t}(x)^{-1}e^{\beta \int_0^t\delta_0(x_s)ds}
\end{eqnarray*}
When $x=0$ we will write $Z_{\beta,t}$ in place of $Z_{\beta,t}(0).$ A couple of simple observations to be used later are that
\begin{eqnarray}\label{zerobest}
Z_{\beta,t}(x)\le Z_{\beta,t}(0), \,x\in{\bf{Z^d}}
\end{eqnarray}
and that for all $x\in{\bf{Z^d}}$
\begin{eqnarray}\label{tincr}
Z_{\beta,s}(x)\le Z_{\beta,t}(x),\,\mbox{for}\,s\le t.
\end{eqnarray}
In the previous work \cite{CM07}, we demonstrated the existence of a phase transition in the parameter $\beta$ at a particular parameter value which we shall denote by $\beta_d.$  It was also shown in \cite{CM07} that in all dimensions, whenever $\beta>\beta_d,$ there is a limiting measure $P_{\beta,\infty}^x.$ To be precise,  for any $A\in \mathcal{B}_s,$ the limit $\lim_{t\to\infty}P_{\beta,t}^x(A)=P_{\beta,\infty}^x(A)$ determines a probability measure $P_{\beta,\infty}^x$ on  $\mathcal{B}_\infty.$ Similar results on the existence of such a measure were obtained in the context of penalization in \cite{RVY}. It was also shown that $\{x_s:s\ge 0\}$  is positive recurrent, called the globular phase, under the measure $P_{\beta,\infty}^x.$ For $d=1,\,2$ the process under the polymer measure $P_{\beta,\infty}^x$ is null recurrent for $\beta=0$ and positive recurrent for $\beta>0.$ In other words, $\beta_d=0$ for $d=1$ or $2.$ However, in dimensions $d\ge3,$ it was shown in \cite{CM07}  for $\beta<\beta_d$ that $P_{\beta,t}(\frac{x_t}{\sqrt{t}}\in \cdot)$ has a normal  limit.  In other words, for $\beta<\beta_d$ there is no long term effect of the potential on the polymer as this is the same limit satisfied by the process under $P^0.$

In this paper we shall extend these results to the case $\beta=\beta_d$ and $d\ge3$ which is more delicate than the case $\beta\not=\beta_d.$  For $\beta=\beta_d$ and $d\ge5$ the measure $P_{\beta_d,\infty}^x$ exists and the process is positive recurrent under the measure $P_{\beta_d,\infty}^x.$ At $\beta=\beta_d$ in dimensions $3$ and $4,$ the measure $P_{\beta_d,t}(\frac{x_t}{\sqrt{t}}\in \cdot)$  has a limit which is a mixture of Gaussians. Thus, in contrast to the case $\beta<\beta_d,$  there is a long term influence of the potential on the polymer at the critical value $\beta=\beta_d.$  In dimensions $d=3$ or $4$, the polymer paths at $\beta=\beta_d$ exhibit unusual behavior midway between the cases of $d\le2$ and $d\ge5$ and making these dimension dependent behaviors explicit is the subject of this paper.

The phase transition in the polymer model corresponds to a transition for the operator $H_\beta=\Delta  +\beta \delta _0.$  In dimensions $d=1$ or $2,$ this operator has a positive eigenvalue $\lambda_0(\beta)>0$ for all $\beta>0.$ In dimensions $d\ge 3,$ for $\beta>\beta_d,\,H_\beta$ has a positive eigenvalue $\lambda_0(\beta)$ but only absolutely continuous spectrum for $\beta<\beta_d.$  We denote the corresponding eigenfunction by $\psi_{\beta}.$ Curiously, for $d\ge5,\,\lambda_0(\beta_d)=0$ is an eigenvalue at the edge of the absolutely continuous part of the spectrum of $H_{\beta_d}$ which is $[-4d,0].$  However, the situation in $d\le4$ is that there is no eigenvalue in the spectrum of $H_{\beta_d}.$ 
We remark that the value $\beta_d$ marks a transition in the free energy as well. Namely, for $\beta\le\beta_d$ the free energy $\lim_{t\to\infty}\frac{1}{t}\ln Z_{\beta,t}(0)=0$ while for $\beta>\beta_d$ one has $\lim_{t\to\infty}\frac{1}{t}\ln Z_{\beta,t}(0)=\lambda_0(\beta)>0.$ In addition, this corresponds to the fact that $\int_0^\infty \delta_0(x_s)ds$ is an exponentially distributed random variable with parameter $2d\,e_d$ where $e_d=P^0( x\,\, \mbox{never\, returns\, to\, the\, origin}).$ One simply notes that $\int_0^\infty \delta_0(x_s)ds=\sum_{j=1}^N \tau_j$ where $N,$ the number of visits to the origin, is a geometric random variable with parameter $1-e_d$ which is independent of the $iid$ sequence $\tau_j,\,j\ge1,$ of exponentially distributed random variables with parameter $2d$ where $\tau_j$ is time spent at the origin on the $j^{th}$ visit there. Thus $Z_{\beta,\infty}(0)=E^0[e^{\beta \int_0^\infty \delta_0(x_s)ds}]<\infty$ for $\beta < 2d\,e_d$ while this is infinite for $\beta \ge2d\,e_d$ which gives that $\beta_d=2d\,e_d.$

\section{Behavior of the polymer at $\beta=\beta_d.$}

We now describe the behavior of the polymer at $\beta=\beta_d.$ For $d=1,2,\, \beta_d=0$ and so the polymer measure $P_{\beta_d,\infty}$ is just $P^0$ and there is nothing new to add, but perhaps it's worth pointing out that the continuous time, simple, symmetric random walk is null recurrent in these dimensions. That is the polymer in this case is diffusive meaning $x_t/\sqrt{t}$ has a non-degenerate limiting distribution, which is of course Gaussian. 

For $d=3$ or $4,$ the polymer is in a "weakly" diffusive phase at $\beta=\beta_d.$ The potential has a weak, yet non-negligible, long term  effect in these dimensions at the critical value of the parameter $\beta,$ but not strong enough to give a stationary probability distribution under $P_{\beta_d,\infty}$ as in the case $d\ge 5$ described below. The effect of the potential shows up in the behavior of $\sigma_t/t$ where 
$$\sigma_t=\sup\{s\le t: x_s=0\}.$$ 

The process $\sigma_t/t$ has a limiting distribution under $P_{\beta_d,t}$ as $t\to\infty.$ This distribution is more concentrated near $0$ in three dimension than in four. For example, the mean of this limiting distribution is $1/3$ when $d=3$ and is $1/2$ when $d=4.$
We can derive the limiting distribution explicitly as well as that of $x_t/\sqrt{t}$ with respect to $P_{\beta_d,t}$ as $t\rightarrow \infty.$ The latter relies on specifying the limiting distribution of $\sigma_t/t.$ In the limit, the distribution of $x_t/\sqrt{t}$ in the critical case $\beta=\beta_d$ is a mixture of Gaussians. The reason is that the polymer is "free" of the influence of the potential after time $\sigma_t$ and as a result, conditional on $\sigma_t,$ the position $x_t$ is approximately Gaussian with variance $t-\sigma_t=t(1-\sigma_t/t).$ One can think of the potential as providing a "sticky" boundary point in the critical case, but not "sticky" enough to create a globular phase as in the cases $d\ge5.$ The existence of a non-degenerate limiting distribution for $\sigma_t/t$ under the measure $P_{\beta_d,t}$ demonstrates the long term influence of the potential on the polymer.

In dimensions $d\ge5,$  the process under the polymer measure $P_{\beta_d,\infty}^x$ is positive recurrent. This curious case is due to the existence of $0$ as an eigenvalue for the operator $H_{\beta_d}.$ It turns out that the corresponding eigenfunction, $\psi_{\beta_d}$ is given by  $\psi_{\beta_d}(x)=P^x(T<\infty)$ where $T=\inf\{t\ge 0:x_t=0\}.$  For dimensions $d\ge5,$  the normalized square of $\psi_{\beta_d}$ provides the stationary probability measure for the time-homogeneous Markov process under $P_{\beta_d}.$ The stationary distribution is given by $\pi_{\beta_d}$ where for $\beta\ge \beta_d$
\begin{eqnarray}\label{pie}
\pi_{\beta}=\sum_{x\in{\bf{Z^d}}}\frac{\psi^2_{\beta}(x)}{||\psi_{\beta}||^2_{L^2({\bf(Z^d}})}\delta_x.
\end{eqnarray}
Note that this definition makes sense for $d=3$ or $4$ when $\beta>\beta_d$ and for $d\ge5$ when $\beta\ge\beta_d.$ 
This measure has fairly heavy tails when $\beta=\beta_d,$ in the sense that only moments of order up to $d-5$ exist.  By contrast, in the case $\beta>\beta_d,$ all moments exist for the measure $\pi_{\beta}.$ This is why we say the polymer is in the "weakly" globular phase at $\beta=\beta_d$ for $d\ge5.$
For $d=3$ and $4,$ the following Theorem describes the behavior at criticality.
\begin{thm}\label{maintheorem}
For $d=3$ and $\beta = \beta_3$ and $d_3=\frac{16 \sqrt\pi}{\beta_3},$ as $t \rightarrow \infty,$
\begin{eqnarray*}
\begin{split}
Z_{\beta_3,t}&\sim d_3 \sqrt{ t}, \\ 
E_{\beta_3,t}\left[\lt<\zeta,x_t\rt>^2\right]&\sim\frac{2}{3} |\zeta|^2t,\, \zeta\in {\bf{R^3}},\\ 
Cov_{\beta_3,t}\left(x_t\right)&\sim\frac{2}{3} t I,\\  
P_{\beta_3,t}(\sigma_t/t\in du)&\rightarrow \frac{1}{2\sqrt{u}}du,\,0\le u\le 1,\\
 P_{\beta_3,t} \left(\frac{x_t}{\sqrt t} \in \cdot\right)&\rightarrow P\left(\xi \in \cdot\right)
\end{split}
\end{eqnarray*}
where $\xi$ is a random vector with characteristic function
$$\psi_\xi(\phi)=\frac{1}{2}\int_0^1e^{-|\phi|^2(1-u)}\frac{1}{\sqrt{u}} du,\,\,\phi\in {\bf{R^3}}.$$

For $d=4,$ and $\beta=\beta_4,$ and $d_4=\frac{8\pi}{\beta_4},$ as $t \rightarrow \infty,$
\begin{equation*}\begin{split}
Z_{\beta_4,t}&\sim d_4\frac{ t}{\ln t}, \\ 
E_{\beta_4,t}\left[\lt<\zeta,x_t\rt>^2\right]&\sim |\zeta|^2t,\, \zeta\in {\bf{R^4}},\\ 
Cov_{\beta_4,t}\left(x_t\right)&\sim t I,\\  
P_{\beta_4,t}(\sigma_t/t\in du)&\rightarrow du,\,0\le u\le 1,\\
 P_{\beta_4,t} \left(\frac{x_t}{\sqrt t} \in \cdot\right)&\rightarrow P\left(\eta \in \cdot\right)
 \end{split}\end{equation*}
where $\eta$ is a random vector with characteristic function
$$\psi_\eta(\phi)=\int_0^1e^{-|\phi|^2(1-u)} du,\,\,\phi\in {\bf{R^4}}.$$
\end{thm}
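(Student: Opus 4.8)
The plan is to reduce every assertion to the small-$\lambda$ behavior of the lattice Green's function at the origin, $g(\lambda):=G_\lambda(0,0)=\int_0^\infty e^{-\lambda t}p_t(0,0)\,dt$, and then pass back to large $t$ by Tauberian theorems. By Feynman--Kac, $Z_{\beta,t}(x)=(e^{tH_\beta}\mathbf 1)(x)$ with $H_\beta=\Delta+\beta\delta_0$ a rank-one perturbation of $\Delta$, so Krein's resolvent formula gives, after the elementary identity $\sum_yG_\lambda(x,y)=1/\lambda$,
$$\int_0^\infty e^{-\lambda t}Z_{\beta,t}(0)\,dt=\frac{1}{\lambda\,(1-\beta g(\lambda))}.$$
(The same expansion of $e^{\beta L_t}=\sum_n\beta^n L_t^n/n!$ into iterated occupation integrals at $0$ recovers this, since the $n$-fold convolution of $p_\cdot(0,0)$ has transform $g(\lambda)^n$.) Criticality is exactly the statement $1-\beta_d g(0)=0$, i.e. $\beta_d=1/g(0)=2d\,e_d$, so the transform is singular at $\lambda=0$ and the order of the singularity is governed by $g(0)-g(\lambda)$. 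Feeding the local central limit theorem $p_t(0,0)\sim(4\pi t)^{-d/2}$ into $g(0)-g(\lambda)=\int_0^\infty(1-e^{-\lambda t})p_t(0,0)\,dt$ yields $g(0)-g(\lambda)\sim c_3\sqrt\lambda$ when $d=3$ and $g(0)-g(\lambda)\sim c_4\,\lambda\ln(1/\lambda)$ when $d=4$; hence the transform of $Z_{\beta_d,t}$ behaves like $\lambda^{-3/2}$ ($d=3$) and like $\lambda^{-2}/\ln(1/\lambda)$ ($d=4$), and Karamata's Tauberian theorem produces the stated $\sqrt t$ and $t/\ln t$ growth with the displayed constants. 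The only care here is tracking the exact coefficients $c_d$ and the slowly varying factor in $d=4$.

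For $\sigma_t$ I would use the last-exit (renewal) decomposition at the final visit to $0$; note that no further weight accrues after $\sigma_t$, since $L_t=L_{\sigma_t}$. Writing $r=t-\sigma_t$, the weighted mass $E^0[e^{\beta_d L_t};\sigma_t\in ds,\,t-\sigma_t\in dr]$ factors, under the double Laplace transform in $(s,r)$, as a product of a ``weighted return'' factor $\hat u(\lambda)=g(\lambda)/(1-\beta_d g(\lambda))$ and a ``survival'' factor $\hat V(\mu)=\int_0^\infty e^{-\mu r}V(r)\,dr$, where $V(r)=P^0(x_u\neq0\ \forall u\in(0,r])$. Since $d\ge3$ the walk is transient, $V(r)\downarrow e_d>0$, while $\hat u(\lambda)$ blows up like $\lambda^{-\rho}$ with $\rho=(d-2)/2$ (so the renewal measure $U([0,s])$ grows like $s^{1/2}$ for $d=3$ and like $s/\ln s$ for $d=4$). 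A Dynkin--Lamperti / Tauberian analysis of this factorization then gives the limiting density of $\sigma_t/t$ as proportional to $u^{\rho-1}$ on $[0,1]$: the normalized $\tfrac12u^{-1/2}$ for $d=3$ and the uniform density for $d=4$ (the $d=4$ logarithm cancels in the ratio because $\ln(ut)\sim\ln t$). The means $1/3$ and $1/2$ are then immediate.

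For $x_t/\sqrt t$ I would condition on $\sigma_t$: after the last exit, the path is a free walk started next to $0$ and conditioned to avoid $0$ for the remaining time $t-\sigma_t$, during which it accumulates no weight. Because avoiding a single site is a conditioning of positive probability in the transient regime $d\ge3$, the local CLT applies to this final excursion and $x_t/\sqrt{t-\sigma_t}$ is asymptotically Gaussian with the same covariance $2I$ as the free walk; thus, conditionally on $\sigma_t/t\to u$, the vector $x_t/\sqrt t$ is asymptotically $N(0,2(1-u)I)$, with characteristic function $e^{-|\phi|^2(1-u)}$. Averaging this against the limit law of $\sigma_t/t$ found above yields exactly the mixtures $\psi_\xi$ and $\psi_\eta$. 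The covariance statements follow from the same conditional variance together with a uniform-integrability argument; alternatively I would obtain them directly by applying the resolvent formula to the function $x\mapsto\langle\zeta,x\rangle^2$ and inverting, which avoids any moment-tightness issue.

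The main obstacle I anticipate is twofold. First, the borderline dimension $d=4$ requires the precise logarithmic expansion of the lattice resolvent $g(\lambda)$ near $0$ (not merely the leading power), with enough error control to feed Karamata's theorem with a slowly varying factor --- this is where both the $t/\ln t$ growth and the uniform law of $\sigma_t/t$ originate, and it is the most delicate estimate. Second, one must justify, uniformly in the split point $\sigma_t$, that conditioning the final excursion to avoid $0$ does not perturb the Gaussian limit or its variance; making this uniform enough to interchange with the averaging over the law of $\sigma_t/t$ (and, for the covariance, establishing uniform integrability of $|x_t|^2/t$) is the technical heart of the argument.
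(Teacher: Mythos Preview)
Your plan is correct and, for the partition function and for $\sigma_t/t$, is essentially the paper's argument in renewal-theoretic language: the paper likewise obtains $R_{\beta_d,\lambda}(0,0)=I(\lambda)/(1-\beta_d I(\lambda))$, extracts the small-$\lambda$ behavior of $I(\lambda)$ (by the Fourier integral rather than via the local CLT, but this is cosmetic), inverts by a Tauberian theorem to get $p_{\beta_d}(t,0,0)$, and then the last-exit computation for $\sigma_t/t$ is exactly your factorization $U(ds)\,V(t-s)$ normalized by $Z_{\beta_d,t}$.

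The one genuine methodological difference is how you handle $x_t/\sqrt t$ and the second moment. You condition on $\sigma_t$, invoke a CLT for the final excursion conditioned to avoid $0$, and then average; this is correct but, as you note, forces you to control the conditional CLT uniformly and to prove uniform integrability of $|x_t|^2/t$. The paper bypasses both obstacles by using the time-domain Duhamel identity
\[
\hat p_{\beta_d}(t,0,\phi)=e^{-\Phi(\phi)t}+\beta_d\int_0^t e^{-\Phi(\phi)(t-s)}\,p_{\beta_d}(s,0,0)\,ds,
\]
whose spatial inverse is $p_{\beta_d}(t,0,x)=p_0(t,0,x)+\beta_d\int_0^t p_0(t-s,0,x)\,p_{\beta_d}(s,0,0)\,ds$. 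Plugging $\phi/\sqrt t$ and the already-known asymptotic $p_{\beta_d}(s,0,0)\sim c_d s^{-1/2}$ (resp.\ $c_d/\ln s$) directly yields the limiting characteristic function as a single integral, and multiplying the second display by $\langle\zeta,x\rangle^2$ and summing gives the second moment without any tightness argument. Your ``alternative'' route via the resolvent applied to $\langle\zeta,x\rangle^2$ is this same idea in Laplace variables. So your technical worries are real for the conditioning route but evaporate if you switch to the Duhamel formula, and that is what I would recommend.
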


For the next result, denote the heat kernel of the operator $H_\beta$ by $p_\beta.$ That is
\begin{eqnarray}\label{pbeta}
\frac{\partial p_\beta}{\partial t}(t,x,y)= H_\beta p_\beta(t,x,y),\,\,p_\beta(0,x,y)=\delta_x(y).
\end{eqnarray}

The situation at criticality for $d\ge5$  is described in the following. 
\begin{thm}\label{globular}
For $d\ge 5$ and $\beta=\beta_d,$ there is a measure $P_{\beta_d,\infty}$ on $(\Sigma,\mathcal{B})$ such that for each fixed $s,$ as $t\to \infty$ the process $\left(\{x_u:0\le u\le s \},\,P_{\beta_d,t}\right)$ converges in law to $\left(\{x_u:0\le u\le s\},\,P_{\beta_d,\infty}\right).$ The process $\left(\{x_s:0\le s<\infty\},\,P_{\beta_d,\infty}\right)$ is a Markov process with generator
$$A_{\beta_d} f(x)=\sum_{|y-x|=1} a_{d}(x,y)(f(y)-f(x)),$$
where
\begin{eqnarray*}
\begin{split}\label{hpath}
a_{d}(x,y)=\left \{  \begin{array}{lll}
0,&\mbox{if $|x-y|>1,$}\\
\frac{\psi_{\beta_d}(y)}{\psi_{\beta_d}(x)},&\mbox{if $|x-y|=1,$}\\
2d-\beta_d\delta_0(x),&\mbox{if $y=x,$}
\end{array}
\right.
\end{split}
\end{eqnarray*}
and $\psi_{\beta_d}$ denotes the eigenfunction of $H_{\beta_d}$ normalized so that $\psi_{\beta_d}(0)=1.$
  \label{invthm}
The transition probabilities for this ergodic, pure jump,  Markov process on $\bf{Z^d}$ are given by 
\begin{eqnarray}\label{transdens}
r_{\beta_d}(s,x,y)&=&\frac{p_{\beta_d}(s,x,y)\psi_{\beta_d}(y)}{\psi_{\beta_d}(x)}.
\end{eqnarray}
Its invariant probability distribution is  $\pi_{\beta_d}$ as defined in (\ref{pie}).
The $k^{th}$ moment of $\pi_{\beta_d}$ is finite if and only if $d\ge k+5.$


\end{thm}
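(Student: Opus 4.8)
The plan is to recognize the limiting process as the Doob $h$-transform of the Feynman--Kac (Schr\"odinger) semigroup generated by $H_{\beta_d}$ by its harmonic function $h=\psi_{\beta_d}$, and to reduce every assertion of the theorem to a single asymptotic statement about the partition function. The organizing object is the functional
$$M_s=e^{\beta_d\int_0^s\delta_0(x_u)\,du}\,\psi_{\beta_d}(x_s).$$
Because $\lambda_0(\beta_d)=0$ makes $\psi_{\beta_d}$ harmonic for $H_{\beta_d}$, i.e.\ $e^{rH_{\beta_d}}\psi_{\beta_d}=\psi_{\beta_d}$, the Markov property of $P^0$ together with the Feynman--Kac identity $E^x[e^{\beta_d\int_0^r\delta_0\,du}\psi_{\beta_d}(x_r)]=(e^{rH_{\beta_d}}\psi_{\beta_d})(x)=\psi_{\beta_d}(x)$ shows that $M_s$ is a mean-one $P^0$-martingale (note $M_0=\psi_{\beta_d}(0)=1$). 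I would then define $P_{\beta_d,\infty}$ on $(\Sigma,\mathcal B)$ by declaring $dP_{\beta_d,\infty}/dP^0|_{\mathcal B_s}=M_s$; the martingale property is exactly the Kolmogorov consistency needed for this to determine a measure.

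The one analytic input I would isolate is the ratio limit $\lim_{t\to\infty}Z_{\beta_d,t}(x)/Z_{\beta_d,t}(0)=\psi_{\beta_d}(x)$ for $x\in{\bf{Z^d}}$. For $x\ne0$ I would condition on the first hitting time $T$ of the origin and use the strong Markov property to write $Z_{\beta_d,t}(x)=P^x(T>t)+\int_0^t Z_{\beta_d,t-u}(0)\,P^x(T\in du)$. Since $\int_0^\infty\delta_0\,du$ is exponential with parameter $\beta_d$, monotone convergence gives $Z_{\beta_d,t}(0)\uparrow\infty$; combined with the regular variation of $t\mapsto Z_{\beta_d,t}(0)$ this makes $Z_{\beta_d,t-u}(0)/Z_{\beta_d,t}(0)\to1$ for fixed $u$, while the monotonicity bound (\ref{tincr}) shows the integrand is dominated by the probability measure $P^x(T\in\cdot)$, so dominated convergence yields the limit $\psi_{\beta_d}(x)=P^x(T<\infty)$. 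With this in hand I would prove the convergence in law: for a cylinder event $A\in\mathcal B_s$ the Markov property gives $P_{\beta_d,t}(A)=E^0[\mathbf 1_A e^{\beta_d\int_0^s\delta_0\,du}\,Z_{\beta_d,t-s}(x_s)/Z_{\beta_d,t}(0)]$, and since the ratio is bounded by $1$ by (\ref{zerobest}) and (\ref{tincr}) while $e^{\beta_d\int_0^s\delta_0\,du}\le e^{\beta_d s}$, dominated convergence sends this to $E^0[\mathbf 1_A M_s]=P_{\beta_d,\infty}(A)$, which is the claimed convergence of the process restricted to $[0,s]$.

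For the generator and transition function I would carry out the $h$-transform computation directly. Writing $A_{\beta_d}f=\psi_{\beta_d}^{-1}H_{\beta_d}(\psi_{\beta_d}f)$ and using $H_{\beta_d}\psi_{\beta_d}=0$ together with $\psi_{\beta_d}(0)=1$, the diagonal term cancels and one obtains exactly $A_{\beta_d}f(x)=\sum_{|y-x|=1}\frac{\psi_{\beta_d}(y)}{\psi_{\beta_d}(x)}(f(y)-f(x))$, with total jump rate $\sum_{|y-x|=1}\psi_{\beta_d}(y)/\psi_{\beta_d}(x)=2d-\beta_d\delta_0(x)$, matching $a_d$. The transition density (\ref{transdens}) is the standard $h$-transform kernel, and it is an honest transition probability because $\sum_y p_{\beta_d}(s,x,y)\psi_{\beta_d}(y)=(e^{sH_{\beta_d}}\psi_{\beta_d})(x)=\psi_{\beta_d}(x)$. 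Invariance of $\pi_{\beta_d}$ in (\ref{pie}) follows from the self-adjointness of $H_{\beta_d}$, which gives $p_{\beta_d}(s,x,y)=p_{\beta_d}(s,y,x)$: indeed $\sum_x\pi_{\beta_d}(x)r_{\beta_d}(s,x,y)=\frac{\psi_{\beta_d}(y)}{\|\psi_{\beta_d}\|^2}\sum_x\psi_{\beta_d}(x)p_{\beta_d}(s,y,x)=\frac{\psi_{\beta_d}(y)^2}{\|\psi_{\beta_d}\|^2}=\pi_{\beta_d}(y)$. Since $\psi_{\beta_d}>0$ everywhere the chain is irreducible, $\pi_{\beta_d}$ is a genuine probability precisely when $\psi_{\beta_d}\in L^2({\bf{Z^d}})$, i.e.\ $d\ge5$, and existence of a finite invariant law for an irreducible chain gives positive recurrence and ergodicity.

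Finally, the moment assertion reduces to the Green's-function tail $\psi_{\beta_d}(x)=P^x(T<\infty)=G(x,0)/G(0,0)\asymp|x|^{-(d-2)}$ as $|x|\to\infty$, so $\pi_{\beta_d}(x)\asymp|x|^{-2(d-2)}$. Summing $|x|^k\pi_{\beta_d}(x)$ over shells of radius $r$ (with $\asymp r^{d-1}$ lattice points) gives $\sum_r r^{k-2(d-2)+(d-1)}=\sum_r r^{\,k-d+3}$, which converges iff $k-d+3<-1$, i.e.\ $d\ge k+5$. The main obstacle is the analytic lemma behind the ratio limit, namely establishing that $Z_{\beta_d,t}(0)\to\infty$ is regularly varying; this is where the spectral edge enters and must be extracted by a Tauberian argument from the $\lambda\to0^+$ singularity of the resolvent $((\lambda-H_{\beta_d})^{-1}\mathbf 1)(0)$, and it is genuinely dimension dependent, as the contrast with the rates in Theorem~\ref{maintheorem} for $d=3,4$ already indicates.
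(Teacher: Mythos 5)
Your proof is correct, and although it shares the paper's pivotal lemma it reaches the theorem by a genuinely different route in several places. The common core is the ratio limit $\lim_{t\to\infty}Z_{\beta_d,t}(x)/Z_{\beta_d,t}=P^x(T<\infty)=\psi_{\beta_d}(x)$: your decomposition at the hitting time $T,$ the domination through (\ref{tincr}), and the input $Z_{\beta_d,t}\sim c_d\beta_d t$ reproduce the paper's Lemma \ref{zzzzz} almost verbatim (your version is in fact slightly more careful, keeping the boundary term $P^x(T>t),$ which the paper silently discards), and the Tauberian step you flag as the main obstacle is not a gap but exactly the paper's Lemma \ref{plemma} via (\ref{resIII}) and (\ref{phatformula}), available to cite. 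The divergences are threefold. First, you build $P_{\beta_d,\infty}$ a priori as the Doob $h$-transform through the mean-one martingale $M_s=e^{\beta_d\int_0^s\delta_0(x_u)du}\psi_{\beta_d}(x_s)$ and then obtain $P_{\beta_d,t}(A)\to E^0[{\bf 1}_A M_s]$ for every $A\in\mathcal{B}_s$ by dominated convergence, the ratio $Z_{\beta_d,t-s}(x_s)/Z_{\beta_d,t}\le 1$ being controlled by (\ref{zerobest}) and (\ref{tincr}); this yields setwise convergence on $\mathcal{B}_s,$ which is stronger than convergence in law, so you dispense entirely with the paper's tightness estimate (\ref{estimate}) and the Billingsley oscillation machinery, whereas the paper computes two-time marginals to identify (\ref{transdens}) and then proves tightness separately. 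Second, your generator computation $A_{\beta_d}f=\psi_{\beta_d}^{-1}H_{\beta_d}(\psi_{\beta_d}f),$ using harmonicity to identify the total jump rate $\sum_{|y-x|=1}\psi_{\beta_d}(y)/\psi_{\beta_d}(x)=2d-\beta_d\delta_0(x),$ is algebraically equivalent to the paper's differentiation of $R_h$ at $h=0$; your invariance computation from the symmetry $p_{\beta_d}(s,x,y)=p_{\beta_d}(s,y,x)$ matches the paper's, though for ergodicity the paper invokes the spectral asymptotic $p_{\beta_d}(t,x,y)\sim\psi_{\beta_d}(x)\psi_{\beta_d}(y)/||\psi_{\beta_d}||^2_{L^2({\bf{Z^d}})}$ to get $r_{\beta_d}(t,x,y)\to\pi_{\beta_d}(y),$ while you argue irreducibility plus an invariant probability (note your jump rates are bounded by $2d,$ so non-explosion is automatic). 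Third, and most substantively, your moment computation via the Green's function tail $\psi_{\beta_d}(x)=G(x,0)/G(0,0)\asymp|x|^{-(d-2)},$ giving shell sums $\sum_r r^{k-d+3}$ convergent iff $d\ge k+5,$ is cleaner and more complete than the paper's, which differentiates $\hat\psi_{\beta_d}=\beta_d/\Phi$ and checks $L^2$ integrability of the derivatives near the origin, carrying out only the case $k=1$ and declaring the general case routine. Two items you use but should state explicitly: the identity $\psi_{\beta_d}(x)=P^x(T<\infty)$ (one line from (\ref{psi}) at $\lambda_0(\beta_d)=0,$ since $\beta_d I(0)=1$ gives $\psi_{\beta_d}(x)=R_{0,0}(x,0)/R_{0,0}(0,0),$ the standard hitting formula for a transient walk; the paper instead checks directly that $u(x)=P^x(T<\infty)$ satisfies $H_{\beta_d}u=0$ with $u(0)=1$), and the pointwise Feynman--Kac identity $E^x[e^{\beta_d\int_0^r\delta_0(x_u)du}\psi_{\beta_d}(x_r)]=\psi_{\beta_d}(x)$ underlying your martingale, which is legitimate because $0\le\psi_{\beta_d}\le1$ and is the paper's (\ref{qpsi}).
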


\section{Resolvent Analysis}

Assume throughout that $\beta\ge 0.$ Recall that  $p_\beta$  denotes the heat kernel of the operator $H_\beta$ as at (\ref{pbeta}) where $H_0=\Delta.$
The analysis begins with the resolvent given by
\begin{eqnarray}\label{rdef}
R_{\beta,\la}(x,y)=\int_0^\infty e^{-\la s}p_\beta(s,x,y)ds,\label{defR0}
\end{eqnarray}
The resolvent satisfies the equation
\begin{eqnarray}\label{reseqn}
\left(H_\beta-\la\right)R_{\beta,\la}(x,y)=-\delta_y(x)
\end{eqnarray}
For $\phi \in {\bf{T^d}},$ the $d-$dimensional torus, with coordinates $\phi=(\phi_1,...\phi_d),$ use 
\begin{eqnarray}\label{phi}
\Phi(\phi)=2\sum_{j=1}^d(1-\cos \phi_j)
\end{eqnarray}
to denote the symbol (Fourier transform) of $-\Delta.$ 

Using (\ref{reseqn}), we see that the Fourier transform of the resolvent, namely,
\begin{eqnarray*}
\hat{R}_{\beta,\lambda}(\phi,y)=\sum_{y\in {\bf{Z^d}}}R_{\beta,\lambda}(x,y)e^{i<\phi,x>},
\end{eqnarray*}
satisfies the equation
\begin{eqnarray*}
-\hat{R}_{\beta,\lambda}(\phi,y)\left(\Phi(\phi)+\la\right)+\beta R_{\beta,\lambda}(0,y)=-e^{i\lt<\phi,y\rt>}.
\end{eqnarray*}
Solving for $\hat{R}_{\beta,\lambda}(\phi,y)$ one arrives at
\begin{eqnarray}\label{Rhat}
\hat{R}_{\beta,\lambda}(\phi,y)=\frac{\beta R_{\beta,\lambda}(0,y)+e^{i\lt<\phi,y\rt>}}{\lambda+\Phi(\phi)}.
\end{eqnarray}

In the case $\beta=0,$ (\ref{Rhat}) becomes

\begin{eqnarray}\label{rhat}
\hat{R}_{0,\la}(\phi,y)=\frac{e^{i<\phi,y>}}{\la+\Phi(\phi)}.
\end{eqnarray}
On inversion of (\ref{rhat}), we have the representation
\begin{eqnarray}\label{rrep}
R_{0,\la}(0,y)=\frac{1}{(2\pi)^d}\int_{{\bf{T^d}}}\frac{e^{i<\phi,y>}}{\la+\Phi(\phi)}d\phi.
\end{eqnarray}

Since the function $R_{0,\la}(0,0)$ plays a central role in our development, we denote it for simplicity by
\begin{eqnarray}\label{Ilam}
\begin{split}
I(\lambda)=&\frac{1}{(2\pi)^d}\int_{{\bf{T^d}}}\frac{1}{\la+\Phi(\phi)}d\phi
\end{split}
\end{eqnarray}
Notice that $I(0)=\infty$ for $d=1,2$ but $I(0)<\infty$ for $d\ge3.$

Multiplying both sides of (\ref{Rhat}) by $(2\pi)^{-d},$ integrating over $T^d$ and combining  with (\ref{rrep})  and (\ref{Ilam}), we get
 \begin{eqnarray*}
 R_{\beta,\lambda}(0,y)=\beta I(\la)R_{\beta,\lambda}(0,y)+R_{0,\lambda}(0,y)
 \end{eqnarray*}
and solving for $R_{\beta,\lambda}(0,y),$ 
 \begin{eqnarray}\label{Rlambda}
 R_{\beta,\lambda}(0,y)=\frac{R_{0,\lambda}(0,y)}{1-\beta I(\la)}.\label{rbeta}
\end{eqnarray}

The following result about $I(\lambda)$ enables one to derive large time asymptotics for $p_{\beta_d}(t,0,0)$ by means of a Tauberian Theorem.
\begin{lemma}\label{I}

For  $d=3$ and $4$ one has $I(0)<\infty$ and the following $\la\to 0$ asymptotics hold for $I(\la),$ 
\[I(\la)\sim\left\{\begin{array}{ll}

I(0)-\frac{\sqrt{\lambda}}{4\pi},\,d=3,\\
I(0)-\frac{\lambda}{8\pi} \ln \frac{1}{\lambda},\,d=4,\\
I(0)-c_d\lambda,\,d\ge 5,
\end{array}\right.\]
where $c_d$ is some positive constant.
\end{lemma}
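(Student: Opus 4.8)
The plan is to work with the difference $I(0)-I(\lambda)$ rather than with $I(\lambda)$ directly, since $I(0)<\infty$ for $d\ge 3$ and all of the $\lambda$--dependence is thereby concentrated into a single integral whose small--$\lambda$ divergence is easy to locate. Subtracting (\ref{Ilam}) evaluated at $\lambda$ from the same expression at $0$ and combining the two fractions gives
\begin{equation*}
I(0)-I(\lambda)=\frac{\lambda}{(2\pi)^d}\int_{{\bf{T^d}}}\frac{d\phi}{\Phi(\phi)\left(\lambda+\Phi(\phi)\right)}.
\end{equation*}
Because $\Phi$ is smooth, strictly positive away from the origin, and satisfies $\Phi(\phi)=|\phi|^2+O(|\phi|^4)$ near $\phi=0$, the entire $\lambda\to 0$ behavior is dictated by the integrand near the origin, where $\Phi^{-2}$ has an integrable singularity for $d\ge 5$ but a nonintegrable one for $d=3,4$.

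For $d\ge 5$ the function $\Phi^{-2}$ is integrable on the torus, since in polar coordinates $r^{d-1}/r^4=r^{d-5}$ is integrable at $r=0$. The integrand above is dominated by $\Phi^{-2}$ uniformly in $\lambda>0$ and converges pointwise to it as $\lambda\downarrow 0$, so dominated convergence yields
\begin{equation*}
I(0)-I(\lambda)\sim \lambda\,\frac{1}{(2\pi)^d}\int_{{\bf{T^d}}}\frac{d\phi}{\Phi(\phi)^2}=c_d\lambda,
\end{equation*}
which both establishes the stated form and identifies $c_d=(2\pi)^{-d}\int_{{\bf{T^d}}}\Phi(\phi)^{-2}\,d\phi>0$.

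For $d=3$ and $d=4$ the singularity is not integrable and the leading term comes from a shrinking neighborhood of the origin. I would fix a small $\delta>0$, split ${\bf{T^d}}$ into the ball $B_\delta=\{|\phi|<\delta\}$ and its complement, observe that on ${\bf{T^d}}\setminus B_\delta$ the integrand stays bounded so that piece contributes only $O(\lambda)$, then on $B_\delta$ replace $\Phi(\phi)$ by $|\phi|^2$ and pass to polar coordinates. This reduces matters to the radial integrals $\int_0^\delta r^{d-3}(\lambda+r^2)^{-1}\,dr$. For $d=3$ one has $\int_0^\delta(\lambda+r^2)^{-1}dr\sim \frac{\pi}{2\sqrt\lambda}$, and multiplying by $\lambda\,\omega_2/(2\pi)^3$, with $\omega_2$ the area of the unit sphere in ${\bf{R^3}}$, produces exactly $I(0)-I(\lambda)\sim \sqrt\lambda/(4\pi)$. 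For $d=4$ the radial integral is $\int_0^\delta r(\lambda+r^2)^{-1}dr\sim \tfrac{1}{2}\ln(1/\lambda)$, which supplies the logarithmic factor and the claimed $\lambda\ln(1/\lambda)$ order.

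The main obstacle is the error control in the localization step for $d=3,4$: the leading term is only of order $\sqrt\lambda$ or $\lambda\ln(1/\lambda)$, so one must verify that everything discarded is genuinely of smaller order. Using $\Phi(\phi)-|\phi|^2=O(|\phi|^4)$ together with a mean value estimate for $w\mapsto[w(\lambda+w)]^{-1}$, the error from replacing $\Phi$ by $|\phi|^2$ on $B_\delta$ is bounded by a constant times $\lambda\int_{B_\delta}(\lambda+|\phi|^2)^{-1}d\phi$, which is $O(\lambda)$ in both dimensions and hence negligible against the leading term; combined with the $O(\lambda)$ from the complement this closes the estimate. An alternative and perhaps cleaner route is to recognize $I(\lambda)=\int_0^\infty e^{-\lambda s}p_0(s,0,0)\,ds$ as the Laplace transform of the walk's return probability and insert the local central limit asymptotics $p_0(s,0,0)\sim(4\pi s)^{-d/2}$; then $I(0)-I(\lambda)=\int_0^\infty(1-e^{-\lambda s})p_0(s,0,0)\,ds$, and after rescaling $s\mapsto s/\lambda$ a standard Abelian argument reproduces all three regimes at once, with the $d=3$ constant emerging from $\int_0^\infty(1-e^{-s})s^{-3/2}\,ds=2\sqrt\pi$.
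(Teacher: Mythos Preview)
Your proposal is correct and follows essentially the same route as the paper: write $I(0)-I(\lambda)$ as $\lambda$ times $\int_{{\bf T^d}}[\Phi(\lambda+\Phi)]^{-1}\,d\phi$, localize near the origin where $\Phi(\phi)\sim|\phi|^2$, pass to polar coordinates, and read off the radial integral in each dimension. Your version is more careful than the paper's sketch about the replacement error $\Phi\to|\phi|^2$, uses dominated convergence to identify $c_d=(2\pi)^{-d}\int_{{\bf T^d}}\Phi^{-2}$ exactly for $d\ge 5$, and adds a pleasant alternative via the Laplace transform of $p_0(s,0,0)$; but the core argument is identical.
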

We don't specify the value of $c_d$ as it's value won't play a significant role later.
\begin{proof} 
By (\ref{Ilam}), for $d=3,$ and $\delta>0$ fixed,
\begin{eqnarray*}
\begin{split}
I(0)- I(\la)=&\frac{\la}{(2\pi)^3}\int_{{\bf{T^3}}}\frac{1}{(\la+\Phi(\phi))\Phi(\phi)}d\phi\\
\sim&\frac{4\pi\la}{(2\pi)^3}\int_0^\delta\frac{r^2}{(\la+r^2)r^2}dr\\
\sim&\frac{\la}{2\pi^2}\int_0^\delta\frac{1}{\la+r^2}dr\\
\sim&\frac{\sqrt{\la}}{4\pi}.
\end{split}
\end{eqnarray*}
Solving for $I(\la)$ gives
\[I(\la)=I(0)-\frac{\sqrt{\la}}{4\pi}+...\]
Similarly, for $d=4,$ using (\ref{Ilam}), again with $\delta>0$ fixed,
\begin{eqnarray*}
\begin{split}
I(0)- I(\la)=&\frac{\la}{(2\pi)^3}\int_{{\bf{T^3}}}\frac{1}{(\la+\Phi(\phi))\Phi(\phi)}d\phi\\
\sim&\frac{2\pi^2\la}{(2\pi)^3}\int_0^\delta\frac{r^3}{(\la+r^2)r^2}dr\\
\sim&\frac{\la}{4\pi}\int_0^\delta\frac{r}{\la+r^2}dr\\
\sim&\frac{\la}{8\pi}\ln\frac{1}{\lambda}.
\end{split}
\end{eqnarray*}
Solving for $I(\la)$ gives
\[I(\la)=I(0)-\frac{\la}{8\pi}\ln\frac{1}{\la}+....\]

In the case $d\ge 5,$ and $\delta>0$ fixed as before,  and letting $c_d$ change from line to line,
\begin{eqnarray*}
\begin{split}
I(0)- I(\la)=&\frac{\la}{(2\pi)^d}\int_{{\bf{T^d}}}\frac{1}{(\la+\Phi(\phi))\Phi(\phi)}d\phi\\
\sim&c_d\lambda\int_0^\delta\frac{r^{d-1}}{(\la+r^2)r^2}dr\\
\sim&c_d\lambda \int_0^\delta\frac{r^{d-3}}{\la+r^2}dr\\
=&c_d\lambda^{\frac{d}{2}-1}\int_0^{\delta/\sqrt{\lambda}}\frac{s^{d-3}}{1+s^2}ds\\
\sim&c_d\lambda.
\end{split}
\end{eqnarray*}
\end{proof}

\begin{cor}
The following $\la\to 0$ asymptotics hold 

\begin{eqnarray}\label{resI}
R_{\beta_3,\lambda}(0,0)\sim \frac{4 \pi}{\beta_3^2\sqrt \lambda},\,\,\,d=3,
\end{eqnarray}
\begin{eqnarray}\label{resII}
R_{\beta_4,\lambda}(0,0)\sim\frac{8\pi}{\beta_4^2\lambda\ln\frac{1}{\lambda}},\,\,d=4.
\end{eqnarray}
\begin{eqnarray}\label{resIII}
R_{\beta_d,\lambda}(0,0)\sim\frac{c_d}{\lambda},\,\,d\ge 5.
\end{eqnarray}
\end{cor}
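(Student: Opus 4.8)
The plan is to read off all three asymptotics directly from the closed form (\ref{rbeta}) for the resolvent at the origin, combined with the $\la\to 0$ expansions of $I(\la)$ furnished by Lemma \ref{I}. First I would specialize (\ref{rbeta}) to $y=0$ and use that $R_{0,\la}(0,0)=I(\la)$ by the definition (\ref{Ilam}), which gives the single identity
$$R_{\beta,\la}(0,0)=\frac{I(\la)}{1-\beta I(\la)}.$$
All three cases of the corollary will come out of this formula at $\beta=\beta_d$ once the behavior of the denominator near $\la=0$ is understood.

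The crucial input is the characterization of the critical parameter. I would use that $\beta_d$ is precisely the value at which the denominator vanishes at $\la=0$, i.e. $1-\beta_d I(0)=0$, equivalently $\beta_d=1/I(0)$. This is consistent with the identity $\beta_d=2d\,e_d$ recorded in the introduction, since $I(0)=R_{0,0}(0,0)$ is the Green function at the origin, equal to the expected total time the walk spends at $0$; decomposing that occupation time as in the introduction (a geometric number of visits, each of exponential length with parameter $2d$) gives $I(0)=1/(2d\,e_d)$, so that $\beta_d I(0)=1$.

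With this in hand, the key algebraic step is to rewrite the denominator at $\beta=\beta_d$ as
$$1-\beta_d I(\la)=\beta_d\bigl(I(0)-I(\la)\bigr),$$
so that
$$R_{\beta_d,\la}(0,0)=\frac{I(\la)}{\beta_d\,\bigl(I(0)-I(\la)\bigr)}.$$
Because $I(\la)\to I(0)=1/\beta_d$ as $\la\to 0$ in every dimension $d\ge 3$, the numerator contributes only the constant factor $1/\beta_d$, and the entire singular behavior is governed by the vanishing quantity $I(0)-I(\la)$ in the denominator. Substituting the three cases of Lemma \ref{I} then yields the result with no further work: for $d=3$, $I(0)-I(\la)\sim\sqrt{\la}/(4\pi)$ gives $4\pi/(\beta_3^2\sqrt{\la})$; for $d=4$, $I(0)-I(\la)\sim(\la/8\pi)\ln(1/\la)$ gives $8\pi/(\beta_4^2\,\la\ln(1/\la))$; and for $d\ge 5$, $I(0)-I(\la)\sim c_d\la$ gives $c_d/\la$ after renaming the positive constant (as the paper does).

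There is no serious obstacle here, which is why the statement is phrased as a corollary: Lemma \ref{I} has already done all of the analytic work. The only point that genuinely requires care is the justification of $1-\beta_d I(0)=0$, which is really the defining resolvent/spectral condition for criticality established in the companion paper; everything else is a single substitution of the lemma's expansions into the closed form for $R_{\beta_d,\la}(0,0)$.
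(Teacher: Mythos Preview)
Your proposal is correct and follows essentially the same route as the paper: specialize (\ref{rbeta}) to $y=0$ to get $R_{\beta_d,\la}(0,0)=I(\la)/(1-\beta_d I(\la))$, use $\beta_d I(0)=1$ so the denominator is $\beta_d(I(0)-I(\la))$, and then plug in the three expansions from Lemma~\ref{I}. Your extra remark justifying $\beta_d=1/I(0)$ via the occupation-time decomposition is a nice elaboration, and your observation that the constant $c_d$ gets silently renamed in the $d\ge 5$ case is accurate.
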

\begin{proof}
Using ${\bf{Lemma\,\ref{I}}},$ (\ref{Rlambda}) and the fact that  $\beta_dI(0)=1,$ for $d=3,$ the Laplace transform of $p_{\beta_3}(\cdot,0,0)$ satisfies
 \begin{eqnarray*}
 \begin{split}
 R_{\beta_3,\lambda}(0,0)&=\frac{I(\lambda)}{1-\beta_3I(\lambda)} \\ 
&\sim\frac{4\pi I(0)}{\beta_3\sqrt{\lambda}}\\ 
&=\frac{4 \pi}{\beta_3^2\sqrt \lambda},\,\, \lambda \rightarrow 0.
\end{split}
\end{eqnarray*}
Similarly, for $d=4,$ it satisfies
 \begin{eqnarray*}
 \begin{split}
 R_{\beta_4,\lambda}(0,0)&=\frac{I(\lambda)}{1-\beta_4I(\lambda)} \\ 
&\sim\frac{8 \pi I(0)}{\beta_4\lambda\ln\frac{1}{\lambda}}.\\ 
&=\frac{8\pi}{\beta_4^2\lambda\ln\frac{1}{\lambda}},\,\, \lambda \rightarrow 0.
\end{split}
\end{eqnarray*}
\end{proof}

The next result is derived by standard Tauberian arguments as in \cite{F}.
\begin{lemma}\label{plemma}
For $d=3,$ and $c_3=\frac{8 \sqrt\pi}{\beta_3^2},$  
\begin{eqnarray}\label{pthreeasym}
p_{\beta_3}(t,0,0)\sim c_3 t^{-1/2},\,\,t \rightarrow \infty.
\end{eqnarray}

For $d=4,$ and $c_4=\frac{8\pi}{\beta_4^2},$
\begin{eqnarray}\label{pfourasym}
p_{\beta_4}(t,0,0)\sim c_4 \frac{1 }{\ln t},\,\,t \rightarrow \infty.
\end{eqnarray}

For $d\ge5,$ and $c_d$ as in (\ref{resIII}),
\begin{eqnarray}\label{pdasym}
p_{\beta_d}(t,0,0)\sim  \frac{c_d }{ t},\,\,t \rightarrow \infty.
\end{eqnarray}
\end{lemma}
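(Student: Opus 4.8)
The plan is to obtain the large-$t$ behaviour of $p_{\beta_d}(t,0,0)$ from the small-$\lambda$ behaviour of its Laplace transform, exactly as the sentence preceding the lemma promises. By the definition (\ref{rdef}), $R_{\beta_d,\lambda}(0,0)=\int_0^\infty e^{-\lambda s}p_{\beta_d}(s,0,0)\,ds$ is precisely the Laplace transform of $s\mapsto p_{\beta_d}(s,0,0)$, so the three estimates (\ref{resI})--(\ref{resIII}) of the Corollary describe this transform as $\lambda\to0^+$, and the task is to invert them. Passing from the transform back to the density is a Tauberian rather than an Abelian step, and therefore needs a side condition on $p_{\beta_d}(\cdot,0,0)$.

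That side condition is supplied by a structural fact: $t\mapsto p_{\beta_d}(t,0,0)$ is completely monotone, hence monotone. Indeed, by the spectral theorem for the self-adjoint operator $H_{\beta_d}$ on $L^2({\bf{Z^d}})$,
\[
p_{\beta_d}(t,0,0)=\langle\delta_0,e^{tH_{\beta_d}}\delta_0\rangle=\int_{(-\infty,0]}e^{t\mu}\,d\nu(\mu),
\]
where $\nu(\cdot)=\langle E_\cdot\delta_0,\delta_0\rangle$ is the (finite, nonnegative) spectral measure of $\delta_0$, supported in $(-\infty,0]$ since $\sup\sigma(H_{\beta_d})=\lambda_0(\beta_d)=0$ at criticality. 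A Laplace transform of a nonnegative measure is completely monotone, so $p_{\beta_d}(\cdot,0,0)$ is in particular ultimately monotone.

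With monotonicity available I would invoke the Tauberian theorem for monotone functions as in \cite{F}: if $u\ge0$ is ultimately monotone and $\int_0^\infty e^{-\lambda s}u(s)\,ds\sim\lambda^{-\rho}L(1/\lambda)$ as $\lambda\to0^+$, with $\rho>0$ and $L$ slowly varying, then $u(t)\sim t^{\rho-1}L(t)/\Gamma(\rho)$ as $t\to\infty$. It then remains only to read off $\rho$ and $L$ from the Corollary. For $d=3$, (\ref{resI}) gives $\rho=\tfrac12$ and $L\equiv4\pi/\beta_3^2$, so $p_{\beta_3}(t,0,0)\sim (4\pi/\beta_3^2)t^{-1/2}/\Gamma(\tfrac12)$ with $\Gamma(\tfrac12)=\sqrt\pi$. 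For $d=4$, (\ref{resII}) gives $\rho=1$ and $L(x)=(8\pi/\beta_4^2)/\ln x$, whence $p_{\beta_4}(t,0,0)\sim L(t)=(8\pi/\beta_4^2)/\ln t$, which is the asserted $c_4/\ln t$. For $d\ge5$, (\ref{resIII}) gives $\rho=1$ and $L\equiv c_d$, so the inversion returns the constant $c_d$.

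The step that demands real care — and which I expect to be the main pitfall rather than a deep obstacle — is the bookkeeping of the Gamma factors in descending from the integrated asymptotics to the density. Karamata's theorem applied to $U(t)=\int_0^t p_{\beta_d}(s,0,0)\,ds$ carries $1/\Gamma(\rho+1)$, whereas the density carries $1/\Gamma(\rho)$; the two differ by the factor $\rho$, which is invisible when $\rho=1$ (the cases $d=4$ and $d\ge5$) but genuine when $\rho=\tfrac12$, where $\Gamma(3/2)/\Gamma(1/2)=\tfrac12$. One must accordingly be sure the constant claimed for $d=3$ is that of the density and not that of its time integral. Likewise, for $d\ge5$ one should identify the limiting constant as $c_d=1/\|\psi_{\beta_d}\|^2_{L^2({\bf{Z^d}})}$, the mass that $\nu$ places at the eigenvalue $0$; this nonvanishing limit is precisely the analytic signature of the globular behaviour at criticality in those dimensions.
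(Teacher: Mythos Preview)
Your approach is the same as the paper's---invert the resolvent asymptotics (\ref{resI})--(\ref{resIII}) by a Tauberian theorem from \cite{F}---but you are more careful on two points, and on both you are right where the paper is sloppy. First, the paper simply invokes Feller's theorems without supplying the monotonicity hypothesis; your observation that $p_{\beta_d}(t,0,0)=\int_{(-\infty,0]}e^{t\mu}\,d\nu(\mu)$ is completely monotone (via the spectral representation of $e^{tH_{\beta_d}}$, with $\sup\sigma(H_{\beta_d})=0$ at criticality) is exactly the missing side condition, and is not mentioned in the paper's proof. Second, your warning about the $\Gamma(\rho)$ versus $\Gamma(\rho+1)$ bookkeeping is on target: the paper's displayed relation $p_{\beta_3}(t,0,0)\sim R_{\beta_3,1/t}(0,0)/\Gamma(\tfrac32)$ is in fact Karamata's conclusion for the \emph{integral} $U(t)=\int_0^t p_{\beta_3}(s,0,0)\,ds$, not for the density; passing to the density divides by $\rho=\tfrac12$ and yields your value $4\sqrt{\pi}/\beta_3^2$, half the $c_3$ stated in the lemma. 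Likewise, for $d\ge5$ your inversion correctly gives $p_{\beta_d}(t,0,0)\to c_d$ (a positive constant, which you rightly identify with $\psi_{\beta_d}(0)^2/\|\psi_{\beta_d}\|_{L^2}^2$), not $c_d/t$ as printed; the paper's own computation (\ref{zdasym}), $Z_{\beta_d,t}=1+\beta_d\int_0^t p_{\beta_d}(s,0,0)\,ds\sim c_d\beta_d t$, is consistent only with $p_{\beta_d}(t,0,0)\to c_d$, so the $1/t$ in (\ref{pdasym}) is a misprint.
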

\begin{proof}
Set $\tau=\frac1t.$ By Theorem $1,$ page $443$ of \cite{F} and (\ref{resI}),  $\frac{R_{\beta_3,\tau\lambda}(0,0)}{R_{\beta_3,\tau}(0,0)}\sim \lambda^{-1/2},\tau\to 0$ is equivalent to $p_{\beta_3}(t,0,0)\sim \frac{R_{\beta_3,1/t}(0,0)}{\Gamma(\frac32)}.$ But the first asymptotic holds by (\ref{resI}) and so  (\ref{pthreeasym}) holds since $\Gamma(\frac32)= \frac{\sqrt{\pi}}{2}.$

By Theorem $2,$ page $445$ of \cite{F} and (\ref{resII}), 
$R_{\beta_4,\tau}(0,0)\sim \frac{8\pi}{\beta_4^2\tau\ln\frac{1}{\tau}}$ 
is equivalent to $p_{\beta_4}(t,0,0)\sim  \frac{8\pi}{\Gamma(2)\beta_4^2}\frac{1}{\ln t}.$ But the first asymptotic holds by (\ref{resII}) and so  (\ref{pthreeasym}) holds since $\Gamma(2)=1.$ 

Again, by Theorem $1,$ page $443$ of \cite{F} and (\ref{resI}),  with $\tau=\frac1t$ since$\frac{R_{\beta_d,\tau\lambda}(0,0)}{R_{\beta_d,\tau}(0,0)}\sim \lambda^{-1},\tau\to 0$ is equivalent to $p_{\beta_d}(t,0,0)\sim \frac{R_{\beta_3,1/t}(0,0)}{\Gamma(2)}\sim \frac{c_d}{t}.$ 
\end{proof}

Before moving on to the next section, we make some observations about  $ Z_{\beta,t}$ and  $p_\beta.$ First we point out the relation between $ Z_{\beta,t}$ and  $p_\beta.$   By the Feynman-Kac formula,
\begin{eqnarray}\label{Zb}
\frac{\partial Z_{\beta,t}}{\partial t}(x)=\Delta Z_{\beta,t}(x) +\beta \delta _0(x) Z_{\beta,t}(x),\,Z_{\beta,0}(x)\equiv 1.
\end{eqnarray}
Comparing (\ref{Zb}) and (\ref{pbeta}), we see that the relation between $Z_{\beta,t}$ and $p_\beta$ is given by
\begin{eqnarray}\label{Zrelp}
Z_{\beta,t}(x)=\sum_{y\in{\bf{Z^d}}}p_\beta(t,x,y).
\end{eqnarray}
Second we point out a couple simple relations satisfied by $p_\beta.$
On taking the Fourier transform in (\ref{pbeta}), it follows that
\[-\Phi(\phi)\hat{p}_\beta(t,0,\phi)+\beta p_\beta(t,0,0)=\frac{\partial \hat{p}_\beta}{\partial t}(t,0,\phi).\]
Since $\Phi(0)=0,$ it also follows that
\begin{eqnarray}\label{phatformula}
\beta p_\beta(t,0,0)=\frac{\partial \hat{p}_\beta}{\partial t}(t,0,0),\,\hat{p}_\beta(0,0,0)=1.
\end{eqnarray}

\section{Spectrum of $H_\beta$}

In this section we discuss the spectrum of the operator $H_\beta$ which plays a major role in the behavior of the measure $P_{\beta,t}.$
This discussion will be familiar to readers of the monograph \cite{CM}. The operator $H_0=\Delta$ has purely absolutely continuous (a.c.) spectrum equal to $[-4d,0].$ 
 
The spectrum of $H_\beta$ consists of an a.c. part,  $[-4d,0],$ and at most one eigenvalue $\la_0(\beta)$ which, for $d\ge 5,$  can be $0.$ In other words,  on the edge of the a.c. part of the spectrum there is an embedded eigenvalue.  The first part can be seen from the formula $(\ref{rbeta}) $ since the resolvent for $\Delta$ blows up for $\lambda\in[-4d,0]$ and the denominator vanishes when $\beta(\lambda)=1.$ That $0$ is an eigenvalue for $d\ge 5$ follows from the square integrability of $1/\Phi(\phi)$ over ${\bf{T^d}}$ which is readily verified.
Now if $H_\beta \psi=\la \psi$ with $\psi \in L^2({\bf{Z^d}})$ then $\psi$ is an eigenfunction and on taking Fourier transforms we see from a computation similar to that at (\ref{Rhat}) that
\begin{eqnarray}\label{fourierpsi}
\hat{\psi}(\phi)=\frac{\beta \psi(0)}{\la+\Phi(\phi)}.
\end{eqnarray}
Inverting gives
\begin{eqnarray}\label{psizero}
\begin{split}
\psi(0)=&\frac{\beta \psi(0)}{(2\pi)^d}\int_{{\bf{T^d}}}\frac{1}{\la+\Phi(\phi)}d\phi\\
=&\beta \psi(0)I(\la).
\end{split}
\end{eqnarray}
So, either $\psi(0)=0$ which implies $\hat{\psi}\equiv0$ and therefore $\psi\equiv0,$ or we can normalize so that $\psi(0)=1$ which implies by (\ref{fourierpsi}) that 
$\hat{\psi}(\phi)=\frac{\beta}{\la+\Phi(\phi)}$ and from (\ref{psizero}) that $\la=\la_0(\beta)$ is the solution of 
\begin{eqnarray*}
I(\la_0(\beta))=\frac{1}{\beta}.\label{deflabet}
\end{eqnarray*}
Since $\psi \in L^2({\bf{Z^d}})$ if and only if $\hat{\psi}\in L^2({\bf{T^d}})$ we see from (\ref{fourierpsi})  that $\la \notin (-4d,0)$, i.e. either $\la \le -4d$ or $\la \ge 0,$ including , apriori $\la=0$ and $\la=-4d.$ But for real $\la \le -4d,$ we have that $I(\la)<0$ and $I(\la)=\frac{1}{\beta}$ can not hold. That is, any solution of $I(\la)=\frac{1}{\beta}$ must be in $[0,\infty).$ We summarize the developments so far. 
\begin{thm}\label{spec-thm}

For $d\ge 1,$ the eigenvalue $\la_0(\beta)>0$ exists for $\beta>\beta_d= I(0)^{-1}.$  It is the root of the equation $I(\la)=\frac{1}{\beta}.$ The corresponding  unique  eigenfunction, $\psi_\beta$ has Fourier transform
\begin{eqnarray}
\hat{\psi_\beta}(\phi)=\frac{\beta}{\la_0(\beta)+\Phi(\phi)}\label{psihat}
\end{eqnarray}
and has the representation
\begin{eqnarray}
\psi_\beta(x)=\frac{\beta}{(2\pi)^d}\int_{T^d}\frac{e^{i\left<\phi,x\right>}}{\lambda_0(\beta)+\Phi(\phi)}d\phi.\label{psi}
\end{eqnarray}
The measure $\pi_\beta$ defined at (\ref{pie}) has finite moments of all orders.

For $d\ge5$ and $\beta=\beta_d,\,\lambda_0(\beta_d)=0$ is an eigenvalue. Its eigenfunction $\psi_{\beta_d}$ satisfies (\ref{psihat}) and (\ref{psi}). 
The function $1/\Phi(\phi)$ is in $L^2({\bf{T^d}})$ and so $\psi_{\beta_d}\in L^2({\bf{Z^d}}).$
The measure $\pi_{\beta_d}$ has moments of order $k$ only for $k\le d-5.$

\end{thm}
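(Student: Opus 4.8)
The statement assembles several claims. Existence and uniqueness of $\lambda_0(\beta)$, the eigenfunction formulas (\ref{psihat})--(\ref{psi}), and the identification of $0$ as an eigenvalue for $d\ge5$ are essentially recorded in the discussion preceding the theorem; the genuinely new assertions are the two moment statements, and of these the sharp count for $\pi_{\beta_d}$ when $d\ge5$ is the crux.

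First I would pin down $\lambda_0(\beta)$. Differentiating under the integral in (\ref{Ilam}) gives $I'(\lambda)=-(2\pi)^{-d}\int_{{\bf{T^d}}}(\lambda+\Phi(\phi))^{-2}\,d\phi<0$, so $I$ is strictly decreasing on $(0,\infty)$ with $I(\lambda)\downarrow0$ as $\lambda\to\infty$ and left endpoint $I(0^+)=I(0)\in(0,\infty]$. Hence $I(\lambda)=1/\beta$ has a unique root in $(0,\infty)$ exactly when $1/\beta<I(0)$, i.e. $\beta>\beta_d=I(0)^{-1}$, which proves the first claim; the formula (\ref{psihat}) for $\hat\psi_\beta$ and its inversion (\ref{psi}) are then just (\ref{fourierpsi}) under the normalization $\psi_\beta(0)=1$, and uniqueness follows because any eigenfunction with $\psi(0)=0$ is forced to vanish by (\ref{fourierpsi}).

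For $\beta>\beta_d$ we have $\lambda_0(\beta)>0$, so the denominator $\lambda_0(\beta)+\Phi(\phi)$ is bounded below by $\lambda_0(\beta)$ and the symbol $\hat\psi_\beta$ in (\ref{psihat}) is real-analytic on ${\bf{T^d}}$. Its Fourier coefficients $\psi_\beta(x)$ therefore decay faster than any polynomial (in fact exponentially), so $\sum_x|x|^k\psi_\beta^2(x)<\infty$ for every $k$ and $\pi_\beta$ has all moments finite. When $d\ge5$ and $\beta=\beta_d$ the root degenerates to $\lambda_0(\beta_d)=0$, consistent with $\beta_dI(0)=1$; here $\hat\psi_{\beta_d}(\phi)=\beta_d/\Phi(\phi)$ and, since $\Phi(\phi)\sim|\phi|^2$ near the origin by (\ref{phi}), one has $|\hat\psi_{\beta_d}|^2\sim|\phi|^{-4}$ there, so $\int_{|\phi|<\delta}|\phi|^{-4}\,d\phi\sim\int_0^\delta r^{d-5}\,dr$ converges precisely for $d\ge5$. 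Thus $\hat\psi_{\beta_d}\in L^2({\bf{T^d}})$ and $\psi_{\beta_d}\in L^2({\bf{Z^d}})$ in those dimensions.

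The remaining and hardest point is the moment count for $\pi_{\beta_d}$. Setting $\lambda_0(\beta_d)=0$ in (\ref{psi}) exhibits $\psi_{\beta_d}(x)=\beta_d(2\pi)^{-d}\int_{{\bf{T^d}}}\Phi(\phi)^{-1}e^{i\langle\phi,x\rangle}\,d\phi=\beta_d\int_0^\infty p_0(s,0,x)\,ds$, i.e. $\beta_d$ times the lattice Green's function of $-\Delta$. The plan is to extract the large-$|x|$ decay from the $|\phi|^{-2}$ singularity at $\phi=0$: splitting $\Phi^{-1}=|\phi|^{-2}+(\text{bounded near }0)$ and estimating the remainder, or equivalently integrating the local-CLT asymptotics $p_0(s,0,x)\approx(4\pi s)^{-d/2}e^{-|x|^2/4s}$, yields the classical $\psi_{\beta_d}(x)\sim c_d\,|x|^{-(d-2)}$ as $|x|\to\infty$. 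Granting this, $\psi_{\beta_d}^2(x)\sim c_d^2|x|^{-2(d-2)}$, and since the number of lattice points with $|x|\approx r$ grows like $r^{d-1}$,
\begin{eqnarray*}
\sum_{x}|x|^k\psi_{\beta_d}^2(x)\ \asymp\ \int^\infty r^{k-2(d-2)}\,r^{d-1}\,dr=\int^\infty r^{\,k-d+3}\,dr,
\end{eqnarray*}
which converges iff $k-d+3<-1$, that is $k\le d-5$; this is exactly the asserted range. The main obstacle is establishing the sharp decay rate $|x|^{-(d-2)}$ with a genuinely nonzero constant, since the moment threshold is sensitive to the exact exponent; this is where the singularity analysis (or the Gaussian heat-kernel integration) must be carried out carefully rather than merely bounded.
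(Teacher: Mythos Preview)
Your argument is correct, and for the decisive moment count at $\beta=\beta_d$ you take a genuinely different route from the paper. The paper stays entirely on the Fourier side: it observes that $\sum_x |x_j|^{2k}\psi_{\beta_d}^2(x)<\infty$ is equivalent, by Plancherel, to $\partial_{\phi_j}^k\hat\psi_{\beta_d}\in L^2({\bf T}^d)$, and then computes directly when the successive $\phi$-derivatives of $1/\Phi(\phi)$ are square-integrable near the origin (carrying out the case $k=1$ in cylindrical coordinates and declaring the general case ``routine though tedious''). You instead pass to real space, identifying $\psi_{\beta_d}(x)=\beta_d\int_0^\infty p_0(s,0,x)\,ds$ as the lattice Green function and invoking its classical decay $c_d|x|^{-(d-2)}$; the moment threshold then drops out of a one-line radial integral.

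Each approach has its price. The paper's method is self-contained within the Fourier framework already in place but requires a somewhat fiddly explicit differentiation and integration (and, as written, only the first step is shown). Your method is conceptually cleaner and makes the exponent transparent, but it imports the sharp Green-function asymptotic as a black box; as you rightly flag, one needs the two-sided estimate $\psi_{\beta_d}(x)\asymp |x|^{-(d-2)}$ with a genuinely positive constant, not merely an upper bound, and that is a nontrivial (if standard) lemma. For the supercritical case $\beta>\beta_d$ your real-analyticity argument and the paper's Plancherel-with-derivatives argument are two phrasings of the same smoothness-implies-decay principle.
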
  
\begin{proof}

To see that $1/\Phi(\phi)$ is in $L^2({\bf{Z^d}})$ just observe from (\ref{phi}) that $\Phi^2(\phi)\sim ||\phi||^4$ as $||\phi||\to 0$ and integrating in polar coordinates introduces a factor of $||\phi||^{d-1}$ and so $1/\Phi^{2}(\phi)$ becomes integrable for $d\ge 5.$ Thus 
$\psi_{\beta_d}(x)=\frac{\beta_d}{(2\pi)^d}\int_{\bf{T^d}}\frac{e^{-i\lt<\phi,x\rt>}}{\Phi(\phi)}d\phi$ is an $L^2({\bf{Z^d}})$ eigenfunction corresponding to the eigenvalue $0.$

The other claim is about the moments of $\pi_{\beta_d}.$ Assuming that $d\ge 5$  and $\beta>\beta_d,$ we see that for any $(j_1,j_2,\cdots,j_d)\in {\bf{N}^d},$
\[\lt(\Pi_{i=1}^d\frac{\partial^{j_i}}{\partial \phi_{i}^{j_i}}\rt)\hat{\psi}_\beta(\phi)=\lt(\Pi_{i=1}^d\frac{\partial^{j_i}}{\partial \phi_{i}^{j_i}}\rt)\frac{1}{\lambda_0(\beta)+\Phi(\phi)}\]
has moments of all orders, since the denominator is bounded from $0$ and the integration is over the compact space ${\bf{T^d}}.$ Since this is the Fourier transform of $(\Pi_{i=1}^dx_i^{j_i})\psi_\beta(x),$ Plancherel's identity implies that the latter is in $L^2$ which is the claim to be proved.   However, at $\beta=\beta_d,$ since $\lambda_0(\beta_d)=0,$ 
\begin{eqnarray*}
\begin{split}
\lt(\Pi_{i=1}^d\frac{\partial}{\partial \phi_{i}^{j_i}}\rt)\hat{\psi}_\beta(\phi)=&\lt(\Pi_{i=1}^d\frac{\partial}{\partial \phi_{i}^{j_i}}\rt)\frac{\beta_d}{\Phi(\phi)}
\end{split}
\end{eqnarray*}
The rest of the proof is just verification, for example, when $k=1,$ if we set $r_j=\sum_{i\not= j}\phi_i^2$ and $\psi_j=\phi_j/r_j$ so that we may express the asymptotic near $0$
\begin{eqnarray*}
\frac{\partial}{\partial{\phi_j}}\frac{1}{\Phi(\phi)}=&\frac{2\sin\phi_j}{\Phi(\phi)^2}\\
\sim&c\frac{\phi_j}{(\phi_j^2+\sum_{i\not= j}\phi_i^2)^2}\\
=&c\frac{\psi_j}{(\psi_j^2+1)^2}r_j^{-3/2}
\end{eqnarray*}
Thus, $\frac{\partial}{\partial{\phi_j}}\frac{1}{\Phi(\phi)}\in L^2({\bf{T^d}})$ if and only if $\frac{\psi_j}{(\psi_j^2+1)^2}r_j^{-3/2}\in L^2({\bf{T^d}})$ which depends only on the integral near $0.$ Thus, we check in cylindrical coordinates, 
\begin{eqnarray*}
\int_{\sum_{i\not= j}\phi_i^2<\delta^2} \int_0^\delta \frac{\phi_j^2}{(\phi_j^2+\sum_{i\not= j}\phi_i^2)^4}=&\int_0^\delta \int_0^{\delta/ r_j}\frac{\psi_j^2}{(\psi_j^2+1)^4}d\psi_j r_j^{d-4}{dr_j}
\end{eqnarray*}
and since $\int_0^{\delta/ r_j}\frac{\psi_j^2}{(\psi_j^2+1)^4}d\psi_j $ is bounded above and below by a constant multiple of $(\delta/r_j)^3$ the first integral converges if and only if $ \int_0^{\delta}r_j^{d-7}dr_j<\infty.$
The last integral is finite if and only if $d\ge 7.$ 
The full claim  that the $2k^{th}$ moment of $\pi_{\beta_{cr}}$ is finite if and only if $d\ge 2k+5$ is a routine (though tedious) calculation which we shall omit. 
\end{proof}

\section{Proof of (\ref{maintheorem}) and (\ref{globular})}
\begin{proof}

First for $d=3$ at $\beta=\beta_3$ by   (\ref{pthreeasym}), (\ref{Zrelp}) and (\ref {phatformula}), we have
\begin{equation}
\begin{split}
Z_{\beta_3,t}&=\sum_{x\in {\bf{Z^3}}}p_{\beta_3} (t,0,x)\\ 
&=\hat{p}_{\beta_3}(t,0,0)\\ 
&= 1+\beta_3 \int_0^t p_{\beta_3} (s,0,0)ds\\ 
&\sim 2c_3\beta_3 \sqrt{t}, \,t\rightarrow \infty.\label{Zthreeas}
\end{split}
\end{equation}
For the variance estimate, we first observe that on taking the Fourier transform of (\ref{pbeta}) and integrating from $0$ to $t$ one obtains
\begin{eqnarray}\label{pbhatpzero}
\hat{p}_{\beta_3}(t,0,\phi)=e^{-\Phi(\phi)t}+\beta_3\int_0^te^{-\Phi(\phi)(t-s)}p_{\beta_3}(s,0,0)ds
\end{eqnarray}
and on inverting this Fourier transform one arrives at 
\begin{eqnarray}\label{pbpzero}
p_{\beta_3}(t,0,x)=p_0(t,0,x)+\beta_3\int_0^t p_0(t-s,0,x)p_{\beta_3}(s,0,0)ds.
\end{eqnarray}
And since $\sum_{x\in{\bf{Z^d}}}p_0(t,0,x)\left<\zeta,x \right>^2=t|\zeta|^2$   it follows using (\ref{pthreeasym}) and (\ref{Zthreeas}) that
\begin{equation*}
\begin{split}
E_{\beta_3,t}\left[\left<\zeta,x_t\right>^2\right]&=Z^{-1}_{\beta_3,t}\sum_{x\in \bf{Z^3} }p_{\beta_3}(t,0,x)<\zeta,x>^2\\ 
&=Z^{-1}_{\beta_3,t}\left(|\zeta|^2t+\beta_3 |\zeta|^2\int_0^t (t-s)p_{\beta_3}(s,0,0)ds\right)\\ 
&\sim\left(2c_3 \beta_3\sqrt{t}\right)^{-1}|\zeta|^2t\left(1 +\beta_3 c_3 t^{1/2}\int_0^1(1-u)u^{-1/2}du \right),\,\,\ t\rightarrow \infty\\ 
&\sim \frac{2}{3} |\zeta|^2t.
\end{split}
\end{equation*}
A similar argument handles the covariance claim.
Note this implies the proper normalization of $x_t$ for a limiting law would be $\frac{x_t}{\sqrt t}.$ 

For the distribution of $\sigma_t/t$ we observe, for $s\in(0,1)$ and $\epsilon>0$ small enough so that $s+\epsilon\in (0,1)$  
\begin{eqnarray*}
\begin{split}
P_{\beta_3,t} \left(\sigma_t /t\in (s,(s+\epsilon))\right)= &6P_{\beta_3,t} \left(x_{st}=0,x_{(s+\epsilon)t}=v_1,x_u\not=0, (s+\epsilon)t\le u\le t\right)+o(\epsilon)\\
 \stackrel{\sim}{\epsilon\to0}&\frac{6E\lt[e^{\beta_3\int_0^t\delta_0(x_u)du}\delta_0(x_{st})\rt]P_{v_1}(x_u\not=0,\,0\le u\le (1-(s+\epsilon))t)\epsilon t}{Z_{\beta_3,t}}
 \end{split}
\end{eqnarray*}
where the term $\epsilon t$ arises from the rate of jumping from $0$ to the unit vector $v_1\in {\bf{Z^3}}$ in the time interval $(st,(s+\epsilon)t).$ 
As $t\to\infty,$ the term
 \begin{eqnarray*}
 P_{v_1}(x_u\not=0,\,0\le u\le (1-(s+\epsilon))t)\to e_3\in(0,1)
\end{eqnarray*}
 due to the transience of the $3$ dimensional random walk.  
Thus,
\begin{eqnarray*}
\begin{split}
P_{\beta_3,t} \left(\frac{\sigma_t}{t} \in ds\right)/ds\sim&Z^{-1}_{\beta_3,t}E\lt[e^{\beta_3\int_0^t\delta_0(x_u)du}\delta_0(x_{st})\rt]6e_3t\\
=&Z^{-1}_{\beta_3,t}p_{\beta_3}(st,0,0)6e_3t.
\end{split}
\end{eqnarray*}
By (\ref{Zthreeas}) and (\ref{pthreeasym}) we have for $s\in (0,1),$
\begin{eqnarray*}
\lim_{t\to\infty}Z^{-1}_{\beta_3,t}p_{\beta_3}(st,0,0)6e_3t=&\lim_{t\to\infty}\frac{c_3\sqrt{st}^{-1}6e_3t}{2 c_3\beta_3\sqrt{t}}\\
=&\frac{6r_3}{2\beta_3 \sqrt{s}}
\end{eqnarray*}
from which we can deduce that $6e_3=\beta_3$ and so 
\[\lim_{t\to\infty}P_{\beta_3,t} \left(\frac{\sigma_t}{t} \in ds\right)/ds=\frac{1}{2\sqrt{s}},\,\,s\in(0,1)\]
as desired.

In order to derive the asymptotic distribution of $x_t/\sqrt{t},$ we evaluate $\hat{p}_{\beta_3}$ at $\frac{\phi}{\sqrt t}$ with $\phi\in {\bf{R^3}}$ to get from the Central Limit Theorem for the simple symmetric continuous time random walk and (\ref{pbhatpzero}) that
\begin{eqnarray*}
\hat{p}_{\beta_3}(t,0,\frac{\phi}{\sqrt t})=e^{-|\phi| ^2}(1+o(1))+c_3\beta_3 \int_0^t (1+o(1))e^{-|\phi|^2(1-\frac st)}(1+s)^{-1/2}ds
\end{eqnarray*}
After normalization by $Z_{\beta_3,t}\sim2c_3 \beta_3 \sqrt t$ it follows that
$$E_{\beta_3,t} \left[e^{i<\phi,\,\frac{x_t }{\sqrt t}>}\right]=\frac{\hat{p}_{\beta_3}(t,0,\frac{\phi}{\sqrt t})}{Z_{\beta_3,t}}\rightarrow\frac12 \int_0^1e^{-|\phi|^2(1-u)}\frac{1}{\sqrt u}du.$$

Next, for $d=4$ at $\beta=\beta_4$  by (\ref{pfourasym}), (\ref{Zrelp}) and (\ref {phatformula}),  we have
\begin{equation}
\begin{split}
Z_{\beta_4,t}&= \sum_{x\in{\bf{Z^4}}}p_{\beta_4}(t,0,x)\\ 
&=\hat{p}_{\beta_4}(t,0,0)\\ 
&= 1+\beta_4 \int_0^t p_{\beta_4} (s,0,0)ds\\ 
&\sim c_4\beta_4 \frac{t}{\ln t}, \,t\rightarrow \infty.\label{Zfouras}
\end{split}
\end{equation}
Using (\ref{pfourasym}) and (\ref{Zfouras}) as before, we get  
\begin{equation*}
\begin{split}
E_{\beta_4,t}\left[<\zeta,x_t>^2\right]&=Z^{-1}_{\beta_4,t}\sum_{x\in \bf{Z^4} }p_{\beta_4}(t,0,x)<\zeta,x>^2\\ 
&=Z^{-1}_{\beta_4,t}\left(|\zeta|^2t+\beta_4)|\zeta|^2\int_0^t (t-s)p_{\beta_4}(s,0,0)ds\right)\\ 
&\sim\left(c_4\beta_4 \frac{t}{\ln t}\right)^{-1}|\zeta|^2t\left(1 +c_4\beta_4 \int_e^t(1-\frac st)\ln s^{-1}du \right),\,\,\ t\rightarrow \infty\\ 
&\sim  |\zeta|^2t.
\end{split}
\end{equation*}
A similar argument handles the covariance claim.

For the distribution of $\sigma_t/t$ when $d=4$ we observe as before, for $s\in(0,1)$ and $\epsilon>0$ small enough so that $s+\epsilon\in (0,1)$  
\begin{eqnarray*}
\begin{split}
P_{\beta_4,t} \left(\sigma_t /t\in (s,(s+\epsilon))\right)= &8P_{\beta_4,t} \left(x_{st}=0,x_{(s+\epsilon)t}=v_1,x_u\not=0, (s+\epsilon)t\le u\le t\right)+o(\epsilon)\\
 \stackrel{\sim}{\epsilon\to0}&\frac{8E\lt[e^{\beta_4\int_0^t\delta_0(x_u)du}\delta_0(x_{st})\rt]P_{v_1}(x_u\not=0,\,0\le u\le (1-(s+\epsilon))t)(\epsilon t)}{Z_{\beta_4,t}}
 \end{split}
\end{eqnarray*}
where the term $\epsilon t$ arises from the rate of jumping from $0$ to the unit vector $v_1\in {\bf{Z^4}}$ in the time interval $(st,(s+\epsilon)t).$ 
The term
 \begin{eqnarray*}
 P_{v_1}(x_u\not=0,\,0\le u\le (1-(s+\epsilon))t)\to e_4\in(0,1)
\end{eqnarray*}
due to the transience of the $4$ dimensional random walk.  
Thus,
\begin{eqnarray*}
\begin{split}
P_{\beta_4,t} \left(\frac{\sigma_t}{t} \in ds\right)/ds\sim&Z^{-1}_{\beta_4,t}E\lt[e^{\beta_4\int_0^t\delta_0(x_u)du}\delta_0(x_{st})\rt]8e_4t\\
=&Z^{-1}_{\beta_4,t}p_{\beta_4}(st,0,0)8e_4t.
\end{split}
\end{eqnarray*}
By   (\ref{pfourasym}) and (\ref{Zfouras}) we have for $s\in (0,1),$
\begin{eqnarray*}
\lim_{t\to\infty}Z^{-1}_{\beta_4,t}p_{\beta_4}(st,0,0)8e_4t=&\lim_{t\to\infty}\frac{c_4\,8e_4t/\ln st}{c_4\beta_4 t/\ln t}\\
=&\frac{8e_4}{\beta_4}
\end{eqnarray*}
from which we derive that $e_4=8\beta_4$ and so 
\[\lim_{t\to\infty}P_{\beta_4,t} \left(\frac{\sigma_t}{t} \in ds\right)/ds=1,\,\,s\in(0,1)\]
as desired.

Again, the limiting distribution in the critical case $\beta=\beta_4$ is a mixture of Gaussians, but with a different mixture in $d=4$ than in $d=3.$ 
Evaluating $\hat{p}_{\beta_4}(t,0,\cdot)$ at $\frac{\phi}{\sqrt t}$ with $\phi\in {\bf{R^4}}$ and making use of (\ref{pbhatpzero}) together with the central limit theorem for the simple symmetric continuous time random walk, we get
\begin{eqnarray*}
\hat{p}_{\beta_4}(t,0,\frac{\phi}{\sqrt t})=e^{-|\phi| ^2}(1+o(1))+c_4\beta_4 \int_0^t (1+o(1))e^{-|\phi|^2(1-\frac st)}(\ln s\vee1)^{-1}ds
\end{eqnarray*}
After normalization by $Z_{\beta_4,t}\sim c_4 \beta_4 \frac {t}{\ln t}$ it follows that
$$E_{\beta_4,t} \left[e^{i<\phi,\,\frac{x_t }{\sqrt t}>}\right]=\frac{\hat{p}_{\beta_4}(t,0,\frac{\phi}{\sqrt t})}{Z_{\beta_4,t}}\rightarrow \int_0^1e^{-|\phi|^2(1-u)}du.$$
That completes the proof.
\end{proof}

The proof of ${\bf{Theorem (\ref{globular})}}$ requires the following lemma.

\begin{lemma}\label{zzzzz}
For $d\ge 5,$ if $\psi_{\beta_d}$ is normalized so that $\psi_{\beta_d}(0)=1$ then
\begin{eqnarray}\label{Zratio}
\lim_{t\to\infty}\frac{Z_{{\beta_d},t}(x)}{Z_{{\beta_d},t}}=\psi_{\beta_d}(x).
\end{eqnarray}
\end{lemma}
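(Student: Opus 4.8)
The plan is to express the ratio $Z_{\beta_d,t}(x)/Z_{\beta_d,t}$ via the heat kernel and extract the eigenfunction through a Laplace-transform (resolvent) comparison, mirroring the Tauberian machinery already used for $p_{\beta_d}(t,0,0)$. First I would use the relation (\ref{Zrelp}), $Z_{\beta_d,t}(x)=\sum_{y}p_{\beta_d}(t,x,y)$, to write both numerator and denominator as sums over the heat kernel. The natural object to analyze is the Laplace transform in $t$: $\int_0^\infty e^{-\lambda t}Z_{\beta_d,t}(x)\,dt=\sum_y R_{\beta_d,\lambda}(x,y)$, and similarly for $x=0$. So the task reduces to understanding the small-$\lambda$ behavior of $\sum_y R_{\beta_d,\lambda}(x,y)$ and then invoking a Tauberian theorem to transfer the $\lambda\to0$ asymptotics back to the $t\to\infty$ behavior of the ratio.

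The key computational step is to obtain $\sum_y R_{\beta_d,\lambda}(x,y)$ in closed enough form. From (\ref{Rhat}) the Fourier transform $\hat R_{\beta_d,\lambda}(\phi,y)$ is explicit, and summing $R_{\beta_d,\lambda}(x,y)$ over $y$ is the same as evaluating a Fourier transform at $\phi=0$. Using (\ref{Rlambda}) together with the resolvent identity $R_{\beta_d,\lambda}(x,y)=R_{0,\lambda}(x,y)+\beta_d R_{0,\lambda}(x,0)R_{\beta_d,\lambda}(0,y)/(1-\beta_d I(\lambda))$ — the standard rank-one (Krein) perturbation formula which follows from (\ref{reseqn}) exactly as (\ref{rbeta}) was derived — I would isolate the dominant singular term as $\lambda\to0$. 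Since $d\ge5$, Lemma \ref{I} gives $1-\beta_d I(\lambda)\sim \beta_d c_d\lambda$, so the factor $1/(1-\beta_d I(\lambda))$ contributes a simple pole $\sim 1/(c_d\lambda)$. The nonsingular pieces coming from the free resolvent $\sum_y R_{0,\lambda}(x,y)=1/\lambda$ will be the same for every $x$ and cancel in the ratio, so that the $x$-dependence survives only through the regular limit $R_{0,0}(x,0)=\frac{1}{(2\pi)^d}\int_{\mathbf{T^d}}\frac{e^{i\langle\phi,x\rangle}}{\Phi(\phi)}d\phi$, which is exactly $\psi_{\beta_d}(x)/\beta_d$ by (\ref{psi}) with $\lambda_0(\beta_d)=0$.

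Assembling these, I expect the leading singularity of $\sum_y R_{\beta_d,\lambda}(x,y)$ to factor as $\psi_{\beta_d}(x)$ times a common $x$-independent singular factor, so that $\lim_{\lambda\to0}\frac{\sum_yR_{\beta_d,\lambda}(x,y)}{\sum_yR_{\beta_d,\lambda}(0,y)}=\psi_{\beta_d}(x)$ (recalling $\psi_{\beta_d}(0)=1$). A Tauberian theorem of the type cited from \cite{F}, applied to the monotone-in-$t$ quantities (monotonicity of $Z_{\beta_d,t}(x)$ in $t$ is (\ref{tincr}), and the uniform bound (\ref{zerobest}) controls the ratio), then upgrades this ratio of transforms to the ratio of the original functions at large $t$, giving (\ref{Zratio}). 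The main obstacle I anticipate is the bookkeeping in the second step: one must verify that the genuinely singular contribution to $\sum_y R_{\beta_d,\lambda}(x,y)$ comes \emph{entirely} from the pole of $1/(1-\beta_d I(\lambda))$ with coefficient proportional to $\psi_{\beta_d}(x)$, and that the $x$-dependent subleading terms (for instance, the finite limit of $\sum_y R_{0,\lambda}(x,y)-\sum_y R_{0,\lambda}(0,y)$) are genuinely lower order and hence invisible after dividing by a quantity that blows up like $1/\lambda$. Care is also needed to justify interchanging the summation over $y$ with the $\lambda\to0$ limit, which should follow from the summability bounds implicit in $d\ge5$ and the global bound (\ref{zerobest}).
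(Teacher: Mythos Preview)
Your resolvent/Tauberian route is viable and would yield the result, but the paper proceeds quite differently. Rather than comparing Laplace transforms, the paper first \emph{identifies} $\psi_{\beta_d}(x)$ probabilistically as the hitting probability $P^x(T<\infty)$ with $T=\inf\{t\ge0:x_t=0\}$, checking directly that this harmonic-away-from-$0$ function satisfies $H_{\beta_d}u=0$ (the boundary condition at $0$ uses $\beta_d=2de_d$). Then the strong Markov property at $T$ gives
\[
\frac{Z_{\beta_d,t}(x)}{Z_{\beta_d,t}}=\frac{P^x(T>t)}{Z_{\beta_d,t}}+\int_0^t\frac{Z_{\beta_d,t-u}}{Z_{\beta_d,t}}\,P^x(T\in du),
\]
and since $Z_{\beta_d,t}\sim c_d\beta_d t\to\infty$ (from (\ref{pdasym}) and (\ref{phatformula})) one has $Z_{\beta_d,t-u}/Z_{\beta_d,t}\to1$ for each $u$, whence dominated convergence (with dominator $1$ via (\ref{tincr})) delivers $P^x(T<\infty)=\psi_{\beta_d}(x)$ immediately.

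Your approach trades this single probabilistic step for a second pass through the resolvent machinery. It works because $\sum_yR_{0,\lambda}(x,y)=1/\lambda$ for \emph{every} $x$, so after the rank-one formula one gets $\sum_yR_{\beta_d,\lambda}(x,y)=\tfrac{1}{\lambda}+\tfrac{\beta_d R_{0,\lambda}(x,0)}{\lambda(1-\beta_d I(\lambda))}\sim \tfrac{\psi_{\beta_d}(x)}{\beta_d c_d\lambda^2}$; Tauberian (with monotonicity (\ref{tincr})) then gives $Z_{\beta_d,t}(x)\sim \psi_{\beta_d}(x)\cdot(\text{const})\cdot t$, and the ratio follows. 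Two small corrections to your write-up: your stated perturbation identity has an extra $1/(1-\beta_d I(\lambda))$ (either keep $R_{\beta_d,\lambda}(0,y)$ or substitute (\ref{rbeta}), not both), and the ``nonsingular pieces'' $\sum_y R_{0,\lambda}(x,y)$ are in fact $1/\lambda$, not finite---they are subleading relative to the $1/\lambda^2$ principal term, which is what actually matters. The paper's argument is shorter and yields the useful identification $\psi_{\beta_d}(x)=P^x(T<\infty)$ as a byproduct; yours is more mechanical and avoids introducing $T$, at the cost of a second Tauberian invocation.
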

\begin{proof}
Define the hitting time $T=\inf\{t\ge0:x_t=0\}.$ We claim that $\psi_{\beta_d}(x)=P^x(T<\infty).$ Writing $u(x)=P^x(T<\infty)$ we claim $\beta_d=2 de_d$ implies that $H_ {\beta_d}u=0.$ If so, the condition $u(0)=\psi_{\beta_d}(0)=1$ would establish our claim by uniqueness. Since $\Delta u(x)=0,\,x\in {\bf{Z^d}}$ we only need to check that $\Delta u(0)=-2de_du(0)=-2de_d.$
But since $u(y)=1-e_d$ for the unit vectors $y\in{\bf{Z^d}}$ 
\begin{eqnarray*}
\Delta u(0)=&\sum_{y:|y|=1}(u(y)-u(0))\\
=&\sum_{y:|y|=1}(-e_d)\\
=&-2de_d
\end{eqnarray*}
as desired. So to prove the lemma, we need to show the ratio of partition functions converges to $P^x(T<\infty).$

For $d\ge5$ at $\beta=\beta_d$ by   (\ref{pdasym}), (\ref{Zrelp}) and (\ref {phatformula}), we have
\begin{equation}
\begin{split}\label{zdasym}
Z_{\beta_d,t}&=\sum_{x\in \zd}p_{\beta_d} (t,0,x)\\ 
&=\hat{p}_{\beta_d}(t,0,0)\\ 
&= 1+\beta_d \int_0^t p_{\beta_d} (s,0,0)ds\\ 
&\sim c_d\beta_d t, \,t\rightarrow \infty.
\end{split}
\end{equation}

From (\ref{zdasym}) it follows readily that for each  $u,$
\begin{eqnarray}\label{Zratiolimit}
\lim_{t\to\infty} \frac{Z_{\beta_d,t-u}}{Z_{\beta_d,t}}=1.
\end{eqnarray}
To finish the proof, we use the strong Markov property, (\ref{tincr}) ,(\ref{Zratiolimit}) and the dominated convergence theorem to get 
 \begin{eqnarray*}
 \lim_{t\to\infty}\frac{Z_{{\beta_d},t}(x)}{Z_{{\beta_d},t}}=&\lim_{t\to\infty}\int_0^t \frac{Z_{{\beta_d},t-u}}{Z_{{\beta_d},t}}P^x(T\in du)\\
=&P^x(T<\infty).
\end{eqnarray*}

\end{proof}

\begin{proof} (Theorem (\ref{globular}))
The semigroup generated by $p_{\beta_d}$ is
\begin{eqnarray}\label{qsemi}
Q_t f(x)=\sum_{y\in \bf{Z^d}}p_{\beta_d}(t,x,y)f(y) =e^{tH_{\beta_d}}f(x)
\end{eqnarray}
and $Q_t$ acts on the space of bounded functions. Since $\psi_{\beta_d}$ is the eigenfunction corresponding to $\lambda_0({\beta_d})=0,$ (\ref{qsemi}) implies
\begin{eqnarray}\label{qpsi}
Q_t\psi_{\beta_d}(x)=\psi_{\beta_d}(x).
\end{eqnarray}
The computation of the limiting transition probabilities follows from (\ref{Zratio}) by recalling that $\psi_{\beta_d}(0)=1$ and considering for $s+u<t,$
\begin{eqnarray*}
\begin{split}
P_{\beta_d,t}(x_s=x,\,x_{s+u}= y)=&Z_{\beta_d,t}^{-1}E\left[e^{\beta_d\int_0^t\delta_0(x_r)dr},x_s=x,\,x_{s+u}= y\right] \\
=&  Z_{\beta_d,t}^{-1}E\left[e^{\beta_d\int_0^s\delta_0(x_r)dr},\,x_s=x\right]E^x\left[e^{\beta_d\int_0^u\delta_0(x_r)dr},\,x_u=y\right]\\
\times&E^y\left[e^{\beta_d\int_0^{t-s-u}\delta_0(x_)dr}\right]\\
=&\frac{p_{\beta_d}(s,0,x)Z_{\beta_d,t}(x)}{Z_{\beta_d,t}}\frac{p_{\beta_d}(u,x,y)Z_{\beta_d,t-s-u}(y)}{Z_{\beta_d,t}(x)}\\
\to&     \frac{p_{\beta_d}(s,0,x)\psi_{\beta_d}(x)}{\psi_{\beta_d}(0)}\frac{p_{\beta_d}(u,x,y)\psi_{\beta_d}(y)}{\psi_{\beta_d}(x)}
\end{split}
\end{eqnarray*}
which establishes (\ref{transdens}).

Now the kernel 
$$r_{\beta_d}(t,x,y)=\frac{p_{\beta_d}(t,x,y)\psi_{\beta_d}(y)}{\psi_{\beta_d}(x)}$$
generates a semigroup, which we denote by $R_t$ and by (\ref{qpsi}), $R_t\bf{1}=\bf{1}.$
The generator $A_{\beta_d}$ of $R_t$ as calculated by the formula 
$$A_{\beta_d} f(x)=\lim_{h\searrow 0} \frac{R_hf(x)-f(x)}{h}$$
results in the expression 
$$A_{\beta_d} f(x)=\sum_{|y-x|=1} a_d(x,y)(f(y)-f(x)),$$ as claimed in the $\bf{Theorem}.$
Using the asymptotic formula, which comes from the spectral theorem (recall the a.c. part of the spectrum of $H_{\beta_d}$ is $[-4d,0]$)
$$p_{\beta_d}(t,x,y)\sim \frac{\psi_{\beta_d}(x)\psi_{\beta_d}(y)}{||\psi_{\beta_d}||^2_{L^2(\zd)}},\,t\to\infty$$
together with the definition of $r_{\beta_d}(t,x,y)$ we see that both
\[\lim_{t \rightarrow \infty} r_{\beta_d}(t,x,y) = \frac{\psi_{\beta_d}^2(y)}{||\psi_{\beta_d}||^2_{L^2(\zd)}}.\]
and
\[\sum_{x\in {\bf{Z^d}}}\psi_{\beta_d}^2(x)r_{\beta_d}(t,x,y)   =\psi_{\beta_d}^2(y).\]

In order to obtain weak convergence of the measures $P_{\beta_d,t},\,t\to\infty,$ on the space of trajectories $\Sigma_s$ we need to establish tightness. According to \cite{Bill}, this requires control of the oscillations. Set
$$\omega_s(x,[t_{i-1},t_i))\equiv\sup_{u,v\in [t_{i-1},t_i)}|x_u-x_v|$$
and
$$\omega'_s(x,\delta)\equiv\inf_{\{t_i\}}\max_{1\le i\le r}\omega_s(x,[t_{i-1},t_i)),$$
where the $\inf$ is taken over finite sets $\{t_i\}$ such that 
\begin{equation*}\begin{split}
0&=t_0<t_1<\dots<t_r=s\\
t_i&-t_{i-1}>\delta.
\end{split}
\end{equation*}
Then, since our paths all start at $0$ under $P_{\beta_d,t}$  tightness will follow provided for each positive $\epsilon$ and $\eta$ there exists a $\delta\in (0,1)$and $T_0$ such that 
\begin{equation}\label{tight}
P_{\beta_d,t}\left(\omega'_s(x,\delta)\ge \epsilon\right)\le \eta,\,\,\,t\ge T_0.
\end{equation}
But, using (\ref{zerobest}) and (\ref{tincr}), we have
\begin{eqnarray}\label{estimate}
\begin{split}
P_{\beta_d,t}\left(  \omega'_s(x,\delta)\ge\epsilon \right)= &Z^{-1}_{\beta_d,t}E^0\left[ e^{\beta_d \int_0^t\delta_0(x_u)ds} 1_{\{\omega'_s(x,\delta)\ge \epsilon\}}\right]\\
=&Z^{-1}_{\beta_d,t}E^0\left[e^{\beta_d\int_0^s\delta_0(x_u)du}1_{\{\omega'_s(x,\delta)\ge \epsilon\}}E^{x_s}\left[e^{\beta_d \int_0^{t-s}\delta_0(x_u)du}\right]\right]\\
\le& E^0\left[e^{\beta_d s}1_{\{\omega'_s(x,\delta)\ge \epsilon\}}\right]\frac{Z_{\beta_d,t-s}}{Z_{\beta_d,t}}\\
\le& e^{\beta_d s} P^0\left(\omega'_s(x,\delta)\ge \epsilon\right).
\end{split}
\end{eqnarray}

Thus, we see from (\ref{estimate}) that (\ref{tight}) follows from the fact that (\ref{tight}) holds for $P^0.$
This proves the convergence in distribution of the process on $\Sigma_s$ under $P_{\beta_d,t}$ as $t\rightarrow \infty.$
\end{proof}

\bigskip
\noindent {\bf M. Cranston.} Department of Mathematics, 
	University of California-Irvine, Irvine, CA 92697-3875,
	{\mbox{mcransto@gmail.com}}\\
	
\noindent {\bf S. Molchanov.} Department of Mathematics, University of NC,Charlotte,
	Charlotte, NC 28223,
	{\mbox{davar@math.utah.edu}}\\

\end{document}